\documentclass[11pt]{article}
\usepackage[tbtags]{amsmath}
\usepackage{amssymb}
\usepackage{amsthm}
\usepackage[misc]{ifsym}
\usepackage{cases}
\usepackage{mathrsfs}
\usepackage{ulem}
\usepackage{color}
\usepackage[colorlinks,linkcolor=black,anchorcolor=black,citecolor=black]{hyperref}

\numberwithin{equation}{section}
\setlength{\textwidth}{160mm} \setlength{\textheight}{218mm}
\oddsidemargin=2 mm \topskip 0.5cm \topmargin=-0.5in
\normalsize

\title{\bf The Optimal Control Problem of Fully Coupled FBSDEs Driven by Sub-diffusion with Applications \thanks{This work is supported by National Natural Science Foundations of China (12471419, 12271304), Humanities and Social Sciences Foundation of Ministry of Education of China (24YJA910008), Shandong Provincial Natural Science Foundations (ZR2024ZD35, ZR2022JQ01) and Nature Science Foundation of Jiangsu Province (BK20221543).}
}

\author{\normalsize Chenhui Hao\thanks{\it School of Mathematics, Shandong University, Jinan 250100, P.R. China, E-mail: 202411922@mail.sdu.edu.cn},\quad Jingtao Shi\thanks{Corresponding Author. \it School of Mathematics, Shandong University, Jinan 250100, P.R. China, E-mail: shijingtao@sdu.edu.cn},\quad Shuaiqi Zhang\thanks{\it School of Mathematics, China University of Mining and Technology, Xuzhou 221116, P.R. China, E-mail: shuaiqiz@hotmail.com}}

\date{}

\newtheorem{mypro}{Proposition}[section]
\newtheorem{mythm}{Theorem}[section]
\newtheorem{mydef}{Definition}[section]
\newtheorem{mylem}{Lemma}[section]

\newtheorem{assumption}{Assumption}[section]

\begin{document}

\maketitle

\noindent{\bf Abstract:}\quad
This paper is devoted to an optimal control problem of fully coupled forward-backward stochastic differential equations driven by sub-diffusion, whose solutions are not Markov processes. The stochastic maximum principle is obtained, where the control domain may not be convex and the diffusion term is independent of the control variable. Additionally, problem with state constraint is researched by using Ekeland's variational principle. The theoretical results obtained are applied to a cash management optimization problem in bear market, and the optimal strategy is derived.

\vspace{2mm}

\noindent{\bf Keywords:}\quad Forward-backward stochastic differential equation, stochastic optimal control, sub-diffusion, stochastic maximum principle

\vspace{2mm}

\section{Introduction}

In modern control theory, it is well known that Pontryagin's maximum principle establishes a necessary condition for optimal controls (\cite{BGP1956}). Since Pontryagin's initial proof of the maximum principle in the deterministic situation, researchers have shown sustained interest in its stochastic counterpart. Significant advancements have been achieved in cases where the control domain is convex and the diffusion term is control-dependent, or in cases where the control domain may be non-convex, but the diffusion term is independent of control \cite{Kushner1972,Haussmann1976,Bismut1978,Bensoussan1982}. Notably, Bismut \cite{Bismut1978} introduced linear {\it backward stochastic differential equations} (BSDEs) by applying duality techniques. However, for non-convex control domains where the diffusion term is dependent of control, a general outcome remains unsolved due to the difference in order between It\^o integral $\int_{E_\varepsilon} \sigma dB$ ($\sqrt{\varepsilon}$ order) and Lebesgue integral $\int_{E_\varepsilon} \sigma ds$ ($\varepsilon$ order). In 1990, Peng \cite{Peng1990} used a second-order variation technique, introducing two adjoint equations to derive a general stochastic maximum principle.

In 1990, Pardoux and Peng \cite{PP1990} introduced the following nonlinear BSDE:
$$
-dY(t)=f(t,Y(t),Z(t))dt-Z(t)dB_t,\quad Y(T)=\xi.
$$
Unlike forward {\it stochastic differential equations} (SDEs), BSDEs specify terminal conditions, with solutions in pairs $(Y(\cdot), Z(\cdot))$. \cite{PP1990} proved the existence and uniqueness of solutions under linear growth condition and Lipschitz condition. Since then, mathematicians and economists recognized BSDEs' theoretical and practical significance in fields such as stochastic analysis, stochastic control, and financial mathematics (\cite{DE1992,Peng1993,EPQ2001,CE2002}).

A controlled {\it forward-backward stochastic differential equation} (FBSDE) consists of a forward equation and a backward equation:
\begin{equation*} \left\{\begin{aligned}
dX(t) &= b(t, X(t), Y(t), Z(t), v(t)) dt + \sigma(t, X(t), Y(t), Z(t), v(t)) dB_t,\\
dY(t) &= -f(t, X(t), Y(t), Z(t), v(t)) dt + Z(t) dB_t,\\
X(0) &= x_0, \quad Y(T) = \phi(X(T)),
\end{aligned} \right.\end{equation*}
with solution triple $(X(\cdot),Y(\cdot),Z(\cdot))$, where $v(\cdot)$ is the control. Researchers have extensively studied the stochastic maximum principle for controlled FBSDEs. Peng first derived a stochastic maximum principle for partially coupled FBSDEs under convex control domains \cite{Peng1993}. Xu \cite{Xu1995} extended this result to partially coupled FBSDEs with general control domains provided that the diffusion term does not include the control variable. Wu \cite{Wu2013} considered general maximum principle for partially coupled FBSDEs, by treating $Z$ as one of control variables. Hu \cite{Hu2017} introduced a new variation technique to deal with $z$, to obtained a general stochastic maximum principle with recursive utilities. Hu and Peng \cite{HP1995} and Peng and Wu \cite{PW1999} introduced monotonicity conditions to establish the existence and uniqueness of fully coupled FBSDEs. Based on these results, Wu \cite{Wu1998} demonstrated a stochastic maximum principle for fully coupled FBSDEs under convex control domains. Shi and Wu \cite{SW2006} extended the maximum principle to general control domains where the forward equation's diffusion term is independent of control. Yong \cite{Yong2010} obtained a general optimality variational principle for controlled fully coupled FBSDEs with mixed initial-terminal conditions. Hu, Ji and Xue \cite{HJX2018} identified a new variation term in $Z$, $\langle p(t), \delta\sigma(t) \rangle I_{E_{\varepsilon}}(t)$, enabling the derivation of a general stochastic maximum principle for fully coupled FBSDEs. Scholars have also studied stochastic maximum principles under various conditions, such as state constraints, partial information and partial observation \cite{Shi2007,Meng2009,SW2010,LS2023}.

As Zhang and Chen \cite{ZC2024-SICON-1} pointed out that, compared to Brownian motion, anomalous sub-diffusion more clearly and accurately describes the characteristics of inactive market trading.
Suppose that $S_t$ is a subordinator with drift $\kappa >0$ and L\'evy measure $\nu$; that is, $S_t=\kappa t+ S_t^0$, where $S_t^0$ is a driftless subordinator with L\'evy measure $\nu$.
Let $ L_t:= \inf\{r>0: S_r >t\}$, $t\geq 0$, be the inverse of $S$. The inverse subordinator $L_t$ is continuous in $t$ but stays constant during infinitely many time periods which result from the infinitely many jumps by the subordinator $S_t$ when its L\'evy measure is non-trivial. $B_t$ is the standard Brownian motion on $\mathbb{R}$, and is independent of $B_t$. When the L\'evy measure $\nu$ of  the subordinator $S_t$ is infinite,
$S_t$ has infinitely many small jumps in any given time interval, and for any $\epsilon >0$, the jumps of $S_t$ of size larger than $\epsilon$ occur according to a Poisson process with parameter $\nu (\epsilon, \infty)$.
In this case, during any fixed time intervals, $L_t$ has infinitely many small time periods but only finite many time periods larger than a fixed length  during which it stays constants. Thus the sub-diffusion $B_{L_t}$  matches well the phenomena that the market constantly  has small corrections but big corrections or bear market occurs only sporadically.

Research on the Pontryagin's maximum principle of stochastic optimal problems driven by sub-diffusion remains limited, mainly due to the non-Markovian nature of this process.
Zhang and Chen \cite{ZC2024-SICON-1} established the existence and uniqueness of solutions for BSDEs driven by sub-diffusion. Using convex and spike variation methods, they derived stochastic maximum principles for both convex and non-convex control domains and sufficient ones for general and convex control domains. Additionally, they get the existence and uniqueness for the solution of fully coupled FBSDEs driven by sub-diffusion and the stochastic maximum principle with convex control domain \cite{ZC2024-JDE,ZC2024-SICON-2}.

In this paper, we study the general stochastic maximum principle for fully coupled FBSDEs driven by sub-diffusion for which the diffusion term is independent of control.
In Section 2, we begin by outlining the fundamental properties of the sub-diffusion and proceed to introduce the stochastic optimal control problem driven by sub-diffusion. In Section 3, we obtain the stochastic maximum principle for FBSDEs driven by sub-diffusion, which is one of the main results in this paper. In Section 4, we consider the state constraints and obtain the stochastic maximum principle by Ekeland's variational principle. In Section 5, theoretical results are used to model cash management optimization in bear markets and the optimal strategy are obtained. Some concluding remarks are given in Section 6.

\section{Problem statement and preliminaries}

\subsection{Properties of sub-diffusion}

Let $0<T<\infty$ be a constant time duration. Let $(\Omega,\mathcal{F},\mathbb{P})$ be a given complete probability space, on which a standard one-dimensional Brownian motion $\{B_t\}_{t\geqslant 0}$ is defined. $\mathbb{E}$ denotes the expectation with respect to $\mathbb{P}$.
\begin{mylem}
Suppose $S_t$ is a subordinator and the L\'{e}vy triple of $S_t$ is $(\kappa,0,v)$, let $L_t=\inf\{r>0\colon S_r>t\}$. By definition $ 0\leqslant L_{t+s}-L_{t}\leqslant s/\kappa$, and $dL_t$ is absolutely continuous to Lebesgue measure $dt$. Let $s\rightarrow0$, then:
\begin{equation}\label{subdiffusion}
	\frac{dL_t}{dt}\quad\text{exists with}\quad0\le\frac{dL_t}{dt}\leqslant1/\kappa,\quad\text{for} \quad a.e.\ t>0.
\end{equation}
\end{mylem}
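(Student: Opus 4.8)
The plan is to derive everything from two elementary features of the subordinator $S_t=\kappa t+S_t^0$: it is non-decreasing and right-continuous, and, because of the drift, it is strictly increasing with slope at least $\kappa>0$. First I would record the monotonicity of $L$: since $S$ is non-decreasing, for $t_1<t_2$ the set $\{r>0:S_r>t_2\}$ is contained in $\{r>0:S_r>t_1\}$, hence $L_{t_1}\leqslant L_{t_2}$; in particular $L_{t+s}-L_t\geqslant 0$ for $s>0$.

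For the upper bound $L_{t+s}-L_t\leqslant s/\kappa$ with $s>0$, the key observation is that for a non-decreasing $S$ the set $\{r>0:S_r>t\}$ is up-closed with left endpoint $L_t$, so, choosing a sequence in it that decreases to $L_t$ and using right-continuity of $S$, one gets $S_{L_t}\geqslant t$. Then for any $r>L_t+s/\kappa$, monotonicity of $S^0$ gives $S_r=\kappa r+S_r^0\geqslant \kappa(r-L_t)+\kappa L_t+S_{L_t}^0=\kappa(r-L_t)+S_{L_t}>s+t$, so $r\in\{r'>0:S_{r'}>t+s\}$ and hence $r\geqslant L_{t+s}$; letting $r\downarrow L_t+s/\kappa$ yields $L_{t+s}\leqslant L_t+s/\kappa$. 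Together with monotonicity this shows $|L_{t+s}-L_t|\leqslant|s|/\kappa$ for all $s$, i.e. $L$ is globally $1/\kappa$-Lipschitz on $[0,\infty)$.

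The remaining assertions are then purely real-analytic. Being non-decreasing, $L$ induces a Lebesgue--Stieltjes measure $dL_t$, and the Lipschitz bound gives $dL_t((a,b])=L_b-L_a\leqslant (b-a)/\kappa$ for every interval, so $dL_t$ is absolutely continuous with respect to Lebesgue measure $dt$ with Radon--Nikodym density bounded by $1/\kappa$. By the classical theorem on a.e.\ differentiability of monotone (equivalently here, Lipschitz) functions on the line, the limit $\frac{dL_t}{dt}=\lim_{s\to0}\frac{L_{t+s}-L_t}{s}$ exists for a.e.\ $t>0$, and passing the bounds $0\leqslant\frac{L_{t+s}-L_t}{s}\leqslant\frac{1}{\kappa}$ to the limit yields $0\leqslant\frac{dL_t}{dt}\leqslant 1/\kappa$.

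I do not expect a genuine obstacle here; the only points requiring care are the handling of strict versus non-strict inequalities around the infimum defining $L_t$ — justifying $S_{L_t}\geqslant t$ from right-continuity, and then upgrading ``$S_r\geqslant t+s$'' to ``$S_r>t+s$'' just to the right of $L_t+s/\kappa$ using the strict increase contributed by the drift. This last point is exactly where $\kappa>0$ is indispensable: without a drift the inverse subordinator is constant on the (full-measure) intervals skipped over by the jumps of $S$ while still increasing, so $dL_t$ becomes singular with respect to Lebesgue measure and no such bounded density exists.
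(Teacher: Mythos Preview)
Your proof is correct and is precisely the argument the paper intends: the paper does not give a separate proof, but embeds the entire sketch in the lemma statement itself (``By definition $0\leqslant L_{t+s}-L_t\leqslant s/\kappa$, and $dL_t$ is absolutely continuous \ldots\ Let $s\to 0$''), and your write-up is simply a careful fleshing-out of that sketch. The only nontrivial step is the Lipschitz bound, and your treatment of it via $S_{L_t}\geqslant t$ and the strict increase contributed by the drift $\kappa>0$ is exactly right.
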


Obviously, $B_{L_t}$ is a continuous martingale with $\left\langle B_{L_t} \right\rangle=L_t$. A significant difference between sub-diffusion and Brownian motion is that it is not a Markov process, so traditional methods do not apply. Zhang and Chen \cite{ZC2024-SICON-1} addressed this issue by introducing {\it overshot process} that imparts Markovian properties to the modified process.

\begin{mylem}
\begin{equation}\label{overshot}
\widetilde{X}_t:=(X_t,R_t):=\left(x_0+B_{L_{(t-R_0)^+}},R_0+S_{L_{(t-R_0)^+}}-t\right)
\end{equation}
with $\widetilde{X_0}=(x_0,R_0)\in\mathbb{R}\times[0,\infty)$ is a time-homogeneous Markov process taking values in $\mathbb{R}\times[0,\infty).$
\end{mylem}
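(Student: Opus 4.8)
The plan is to realise $\widetilde X=(X,R)$ as a flow driven by the bivariate Lévy process $(B,S)$, and to reduce the Markov property to the strong Markov property of $(B,S)$ together with stationarity and independence of its increments. Fix $t\ge 0$ and set $\theta:=L_{(t-R_0)^+}$; write $\widehat B_\rho:=B_{\theta+\rho}-B_\theta$ and $\widehat S_\rho:=S_{\theta+\rho}-S_\theta$ for the post-$\theta$ increments, and $\widehat L_\rho:=\inf\{\sigma>0:\widehat S_\sigma>\rho\}$ for the inverse of $\widehat S$. The central step is the additivity identity for the inverse subordinator,
\begin{equation}\label{plan-L}
L_{(t+s-R_0)^+}=\theta+\widehat L_{(s-R_t)^+}\qquad\text{for every }s\ge 0.
\end{equation}
If $t<R_0$, then $\theta=0$, $\widehat S=S$, $R_t=R_0-t$, and \eqref{plan-L} is immediate. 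If $t\ge R_0$, put $u:=t-R_0$; by \eqref{overshot}, $R_t=R_0+S_\theta-t=S_\theta-u\ge 0$. Since $S_\sigma\le S_{\theta-}\le u$ for $\sigma<\theta$, no time before $\theta$ makes $S$ exceed $u+s$ when $s\ge 0$; and for $r=\theta+\rho$ one has $S_r>u+s\iff\widehat S_\rho>s-R_t$. Hence, when $s<R_t$, the relation $S_\theta=u+R_t>u+s$ gives $L_{u+s}=\theta$, which matches $\widehat L_0=0$; when $s\ge R_t$, one gets $L_{u+s}=\theta+\widehat L_{s-R_t}$. This is \eqref{plan-L}. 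The only facts used are $S_0=0$, the right-continuity and strict monotonicity of $S$ (its drift $\kappa>0$), and the definition of $L$.

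Substituting \eqref{plan-L} into the definition \eqref{overshot}, using $B_{\theta+\widehat L_\cdot}=B_\theta+\widehat B_{\widehat L_\cdot}$ and $S_{\theta+\widehat L_\cdot}=S_\theta+\widehat S_{\widehat L_\cdot}$, and recalling $X_t=x_0+B_\theta$ and $R_t=R_0+S_\theta-t$, one obtains, for every $s\ge 0$,
\begin{equation}\label{plan-explicit}
\widetilde X_{t+s}=\Big(\,X_t+\widehat B_{\widehat L_{(s-R_t)^+}}\,,\ \ R_t+\widehat S_{\widehat L_{(s-R_t)^+}}-s\,\Big).
\end{equation}
Here $R_{t+s}\ge 0$ because $\widehat S_{\widehat L_\rho}\ge\rho$ for $\rho\ge 0$, so the state space $\mathbb R\times[0,\infty)$ is preserved. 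Crucially, the right-hand side of \eqref{plan-explicit} has the form $\Phi\big(\widetilde X_t;(\widehat B_\rho)_{\rho\ge 0},(\widehat S_\rho)_{\rho\ge 0}\big)$ for a measurable map $\Phi$ on $(\mathbb R\times[0,\infty))\times(\text{path space})^{2}$ that does \emph{not} depend on $t$.

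To conclude, note $\{L_u\le r\}=\{S_r\ge u\}$, so $\theta=L_{(t-R_0)^+}$ is a stopping time for the augmented right-continuous filtration $\{\mathcal G_r\}$ generated by $(B,S)$, and $\theta<\infty$ a.s.\ since $S_r\to\infty$. As $(B,S)$ is a Lévy process in $\mathbb R^2$ (its coordinates being independent Lévy processes), its strong Markov property at $\theta$ shows that $(\widehat B,\widehat S)$ is independent of $\mathcal G_\theta$ and has the same law as $(B,S)$. For $u\le t$ one has $L_{(u-R_0)^+}\le\theta$, so $B_{L_{(u-R_0)^+}}$ and $S_{L_{(u-R_0)^+}}$ are $\mathcal G_\theta$-measurable; thus the natural filtration $\mathcal F^{\widetilde X}_t=\sigma(\widetilde X_u:u\le t)$ satisfies $\mathcal F^{\widetilde X}_t\subseteq\mathcal G_\theta$, while $\widetilde X_t$ is $\mathcal F^{\widetilde X}_t$-measurable. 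Consequently, for any bounded measurable $g$ on path space, \eqref{plan-explicit} and the independence and equidistribution of $(\widehat B,\widehat S)$ yield
\begin{equation*}
\mathbb E\!\left[g\big((\widetilde X_{t+s})_{s\ge 0}\big)\,\big|\,\mathcal F^{\widetilde X}_t\right]=\mathbb E\!\left[g\big(\Phi(\widetilde X_t;\widehat B,\widehat S)\big)\,\big|\,\mathcal F^{\widetilde X}_t\right]=G(\widetilde X_t),\qquad G(y):=\mathbb E\!\left[g\big(\Phi(y;B,S)\big)\right],
\end{equation*}
which is the Markov property; and since neither $\Phi$ nor the law of $(B,S)$ depends on $t$, the transition kernel $P_s(y,\cdot)=\mathbb P\big(\Phi_s(y;B,S)\in\cdot\big)$ is independent of $t$, i.e.\ $\widetilde X$ is time-homogeneous.

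I expect the genuine obstacle to be the additivity identity \eqref{plan-L}: the inverse subordinator stays constant over the (random) ranges straddled by the jumps of $S$, the time $\theta$ need not be a jump time of $S$ (the drift $\kappa$ lets $S$ ``creep'' through levels), and the overshoot $R_t=S_\theta-(t-R_0)^+$ may be zero or strictly positive, so the cases $t<R_0$; $t\ge R_0$ with $s<R_t$; and $t\ge R_0$ with $s\ge R_t$ must each be matched against the definition of $L$ with the correct strict versus non-strict inequalities. The remaining ingredients — measurability of $\Phi$ on path space, that $\theta$ is a bona fide stopping time, and the inclusion $\mathcal F^{\widetilde X}_t\subseteq\mathcal G_\theta$ — are routine.
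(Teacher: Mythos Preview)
Your argument is correct and is essentially the standard route: realize $\widetilde X$ as a functional of the bivariate L\'evy process $(B,S)$, prove the additivity identity $L_{(t+s-R_0)^+}=\theta+\widehat L_{(s-R_t)^+}$ for the inverse subordinator, and then invoke the strong Markov property of $(B,S)$ at the stopping time $\theta=L_{(t-R_0)^+}$ to conclude that the post-$\theta$ increments are independent of the past and identically distributed. Your case analysis for \eqref{plan-L} is accurate, and the verification that $\{L_u\le r\}=\{S_r\ge u\}$ (hence $\theta$ is a genuine stopping time) uses exactly the strict increase coming from the drift $\kappa>0$.

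Note, however, that the paper does \emph{not} give its own proof of this lemma: it is quoted as a preliminary fact from Zhang and Chen \cite{ZC2024-SICON-1}, where the overshoot augmentation is introduced precisely to restore the Markov property. So there is no ``paper's proof'' to compare against here; your write-up supplies a complete self-contained argument where the paper simply cites the result. The approach you take is the natural one and matches what one finds in the cited reference.
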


Let $ R_0=a$, and let the filtrations generated by $X_t$ and $\widetilde{X}_t$ be $\mathcal{F}_t^\prime$ and $\mathcal{F}_t$, respectively. Clearly, $\mathcal{F}_t$ and $\mathcal{F}_t^\prime$ depend on $a$, and $\mathcal{F}_t^\prime\subset\mathcal{F}_t$. We also have the following result.

\begin{mylem}
	For ant $t\geqslant 0$ and $R_0\geqslant0$,
\begin{equation}
\lim\limits_{s\to0+}\frac{1}{s}\mathbb{E}\left[L_{(t+s-R_{0})^{+}}-L_{(t-R_{0})^{+}}\big|\mathcal{F}_{t}\right]=\kappa^{-1}1_{\{R_{t}=0\}}=\frac{dL_{(t-R_{0})^{+}}}{dt}.
\end{equation}
\end{mylem}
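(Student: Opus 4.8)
The plan is to compute the conditional expectation $\mathbb{E}[L_{(t+s-R_0)^+}-L_{(t-R_0)^+}\mid\mathcal{F}_t]$ by conditioning on the state of the Markov process $\widetilde{X}_t=(X_t,R_t)$ at time $t$, and then split into the two cases $R_t>0$ and $R_t=0$. First I would invoke Lemma 2.2 so that, given $\mathcal{F}_t$, the future increment of the inverse subordinator depends only on the pair $(X_t,R_t)$; in fact only on $R_t$, since $L$ is driven by $S$ alone. The key structural fact is that $R_t$ is the overshoot $R_0+S_{L_{(t-R_0)^+}}-t$: when $R_t>0$ the inverse subordinator $L$ is currently "frozen" (we are in the middle of a jump of $S$, i.e. a constant stretch of $L$), whereas $R_t=0$ means $t$ is a point of increase.

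Case $R_t>0$: here there is a strictly positive amount of overshoot to be consumed before $L$ can move again. For $s$ small enough that $s<R_t$, the event $\{L_{(t+s-R_0)^+}>L_{(t-R_0)^+}\}$ forces $S$ to have a jump landing within distance $R_t$ of its current value accumulated before time $t+s$; using the continuity of $L_t$ in $t$ and the fact that $L$ stays constant on the time interval during which $R$ decreases linearly (slope $-1$) from $R_t$ to $0$, one gets $L_{(t+s-R_0)^+}=L_{(t-R_0)^+}$ deterministically once $s<R_t$. Hence the increment is $0$, the conditional expectation is $0$, its normalized limit is $0$, and this matches $\kappa^{-1}1_{\{R_t=0\}}=0$ and $dL_{(t-R_0)^+}/dt=0$ (the latter by Lemma 2.1, since $dL_t/dt$ vanishes on constancy intervals).

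Case $R_t=0$: here $t$ is a point of increase, $L_{(t-R_0)^+}=L_t$ (absorbing the shift), and by the strong Markov property of $\widetilde{X}$ started afresh at $(X_t,0)$ the increment $L_{t+s}-L_t$ has, to first order in $s$, the same law as $L_s$ started from overshoot $0$; the drift $\kappa$ of the subordinator $S$ gives $\mathbb{E}[L_s\mid R_0=0]=\kappa^{-1}s+o(s)$ because on the set where no jump of $S$ occurs in $[0,L_s]$ we have $S_{L_s}=\kappa L_s = s$ exactly, and the probability that a jump interferes in a window of size $O(s)$ is $o(1)$ (even when $\nu$ is infinite, the expected total mass of jumps in such a window is $O(s)$, controllable since $\int_0^1 r\,\nu(dr)<\infty$ for a subordinator). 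Dividing by $s$ and letting $s\to0+$ yields $\kappa^{-1}$, matching $\kappa^{-1}1_{\{R_t=0\}}$; the identification with $dL_{(t-R_0)^+}/dt$ then follows from Lemma 2.1 together with the standard fact that $dL_t/dt=\kappa^{-1}1_{\{R_t=0\}}$ a.e.

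The main obstacle I anticipate is the infinite-L\'evy-measure case: one must rule out, uniformly as $s\to0$, the contribution of the infinitely many small jumps of $S$ that could in principle accumulate and make $L_{t+s}$ jump over the "$\kappa^{-1}s$ from the drift" prediction. This is handled by the integrability $\int_{(0,1)} r\,\nu(dr)<\infty$, which bounds the expected accumulated jump size in any window by a constant times the window length, so the correction is $o(s)$; making this estimate rigorous and interchanging limit and expectation (dominated convergence, using the a priori bound $0\le L_{t+s-R_0^+}-L_{t-R_0^+}\le s/\kappa$ from Lemma 2.1) is the technical heart of the argument. Everything else is bookkeeping with the overshoot process and the elementary geometry of the graph of $t\mapsto L_t$.
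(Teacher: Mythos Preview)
The paper does not actually prove this lemma. Lemma 2.3 is stated in Section~2.1 as a preliminary fact about sub-diffusions, alongside Lemmas~2.1 and~2.2, and no argument is given; these results are imported from Zhang and Chen \cite{ZC2024-SICON-1}. So there is no ``paper's own proof'' to compare against.

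Assessed on its own merits, your plan is sound. The case split on $\{R_t>0\}$ versus $\{R_t=0\}$ via the Markov property of $\widetilde{X}_t$ is the natural route, and your treatment of $R_t>0$ (the increment is identically zero for $s<R_t$ because $L$ is flat until the overshoot is consumed) is correct and complete. For $R_t=0$ the heuristic ``no jump of $S$ in $[0,L_s]$'' is not literally usable when $\nu$ is infinite, as you note, but the repair you sketch is the right one: use the deterministic two-sided bound $0\le L_{(t+s-R_0)^+}-L_{(t-R_0)^+}\le s/\kappa$ from Lemma~2.1 for dominated convergence, and control the jump contribution via $\int_{(0,1)} r\,\nu(dr)<\infty$. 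One clean way to organize this is to write $S_{L_s-}\le s\le S_{L_s}$, so that $\kappa L_s\le s$ and $\kappa L_s\ge s-\Delta S_{L_s}-\sum_{u<L_s}\Delta S_u$, and then bound the expected total jump mass over the interval $[0,L_s]\subset[0,s/\kappa]$; this avoids any appeal to ``no jump'' events. The identification with $dL_{(t-R_0)^+}/dt$ at the end is exactly Lemma~2.1 plus the a.e.\ characterization of the density, as you say.
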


\subsection{Existence and uniqueness for solutions of FBSDEs driven by sub-diffusion}

Considering the following fully coupled FBSDE driven by sub-diffusion:
\begin{equation}\label{FBSDE driven by sub-diffusion}
\left\{
\begin{aligned}
	dx(t)&=b(t,x(t),y(t))dt+\delta(t,x(t),y(t),z(t))d{L_{(t-a)+}}+\sigma(t,x(t),y(t),z(t))dB_{L_{(t-a)+}},\\
   -dy(t)&=f(t,x(t),y(t))dt+h(t,x(t),y(t),z(t))d{L_{(t-a)+}}-z(t)dB_{L_{(t-a)+}},\\
     x(0)&=x_0,\quad y(T)=\phi(x(T)),
\end{aligned}
\right.
\end{equation}
whose solution triple is $(x(\cdot),y(\cdot),z(\cdot))\in\mathbb{R}\times\mathbb{R}\times\mathbb{R}$. Without loss of generality, let $0\leqslant a < T$. $b, \delta, \sigma, f$ and $h$ are all $\mathcal{F}_t^{\prime}$-adapted stochastic processes. $\phi$ is a square-integrable $\mathcal{F}_T^\prime$-measurable random variable. We need the following hypothesis.

\begin{assumption} \label{assumption}
i) \( b, \sigma, f, \phi, \delta, h \) are all uniformly Lipschitz continuous over their domains, and
\[
\mathbb{E}\left[b^2(0) + \sigma^2(0) + f^2(0) + \delta^2(0) + h^2(0) + \phi^2(0)\right] < \infty.
\]

ii) Monotonicity condition: There exists a constant \( C > 0 \) such that for any \( t > 0 \), \( x_1, x_2, y_1,\) \(y_2, z_1, z_2 \in \mathbb{R} \),
\[
\begin{aligned}
	& \big(b(t, x_1, y_1) - b(t, x_2, y_2)\big)(y_1 - y_2) - \big(f(t, x_1, y_1) - f(t, x_2, y_2)\big)(x_1 - x_2) \\
	& \leqslant -C\big(|x_1 - x_2|^2 + |y_1 - y_2|^2\big),
\end{aligned}
\]
and
\[
\begin{aligned}
	& \big(\sigma(t, x_1, y_1, z_1) - \sigma(t, x_2, y_2, z_2)\big)(z_1 - z_2) + \big(\delta(t, x_1, y_1, z_1) - \delta(t, x_2, y_2, z_2)\big)(y_1 - y_2) \\
	& - \big(h(t, x_1, y_1, z_1) - h(t, x_2, y_2, z_2)\big)(x_1 - x_2) \\
	& \leqslant -C\big(|x_1 - x_2|^2 + |y_1 - y_2|^2 + |z_1 - z_2|^2\big).
\end{aligned}
\]

iii) \( \phi(\cdot) \) is a non-decreasing function.

Conditions ii) and iii) can be replaced by the following:

ii') Monotonicity condition: There exists a constant \( C > 0 \) such that for any \( t > 0 \), \( x_1, x_2, y_1, y_2, z_1, z_2 \in \mathbb{R} \),
\[
\begin{aligned}
& \big(b(t, x_1, y_1) - b(t, x_2, y_2)\big)(y_1 - y_2) - \big(f(t, x_1, y_1) - f(t, x_2, y_2)\big)(x_1 - x_2) \\
& \geqslant C\big(|x_1 - x_2|^2 + |y_1 - y_2|^2\big),
\end{aligned}
\]
and
\[
\begin{aligned}
	& \big(\sigma(t, x_1, y_1, z_1) - \sigma(t, x_2, y_2, z_2)\big)(z_1 - z_2) + \big(\delta(t, x_1, y_1, z_1) - \delta(t, x_2, y_2, z_2)\big)(y_1 - y_2) \\
	& - \big(h(t, x_1, y_1, z_1) - h(t, x_2, y_2, z_2)\big)(x_1 - x_2) \\
	& \geqslant C\big(|x_1 - x_2|^2 + |y_1 - y_2|^2 + |z_1 - z_2|^2\big).
\end{aligned}
\]

iii') \( \phi(\cdot) \) is a non-increasing function.
\end{assumption}

\begin{mydef}
Let
\[
\begin{aligned}
	\mathcal{M}_a[0, T] &:= \Big\{(x(\cdot), y(\cdot), z(\cdot)) : (x(\cdot), y(\cdot), z(\cdot)) \text{ is an adapted process defined} \\
    &\qquad \text{ on } [0, T]\text{ with respect to } \{\mathcal{F}_t^{\prime}\}_{t \in [0, T]} \text{ such that} \\
	&\qquad \mathbb{E}\Big[|x(0)|^2 + \int_0^T(|x(t)|^2 + |y(t)|^2) \, dt + \int_0^T |z(t)|^2 \, dL_{(t-a)+}\Big] < \infty\Big\}.
\end{aligned}
\]
A triple \( (x(\cdot), y(\cdot), z(\cdot)) \in \mathcal{M}_a[0, T] \) is called an adapted solution to (\ref{FBSDE driven by sub-diffusion}) if, for any \( t \in [0, T] \), it satisfies:
\[\left\{
\begin{aligned}
	x(t) &= x_0 + \int_0^t b(s, x(s), y(s)) \, ds + \int_0^t \delta(s, x(s), y(s), z(s)) \, dL_{(s-a)+}\\
         &\quad  + \int_0^t \sigma(s, x(s), y(s), z(s)) \, dB_{L_{(s-a)+}}, \\
	y(t) &= \phi(x(T)) + \int_t^T g(s, x(s), y(s)) \, ds + \int_t^T h(s, x(s), y(s), z(s)) \, dL_{(s-a)+}\\
         &\quad - \int_t^T z(s) \, dB_{L_{(s-a)+}}.
\end{aligned}
\right.\]
The adapted solution to (\ref{FBSDE driven by sub-diffusion}) is unique if for any two adapted solutions \( (x(\cdot), y(\cdot), z(\cdot)) \) and \( (x'(\cdot), y'(\cdot), z'(\cdot)) \), we have \( x(\cdot) = x'(\cdot) \), \( y(\cdot) = y'(\cdot) \), $\mathbb{P}$-a.s., and \( \mathbb{E}\int_0^T |z(s) - z'(s)|^2 \, dL_{(s-a)^+} = 0 \).
\end{mydef}

From \cite{ZC2024-JDE}, we have the following result.

\begin{mythm}\label{th1}
If the FBSDE (\ref{FBSDE driven by sub-diffusion}) satisfies Assumption (\ref{assumption}), then it admits a unique adapted solution $(x(\cdot),y(\cdot),z(\cdot))\in \mathcal{M}_a[0, T]$.
\end{mythm}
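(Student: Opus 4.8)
The plan is to adapt the classical method of continuation (the homotopy/continuation-in-parameter argument of Hu--Peng \cite{HP1995} and Peng--Wu \cite{PW1999}) to the sub-diffusive setting. Since the statement explicitly says this result comes from \cite{ZC2024-JDE}, the aim here is to sketch the structure of that argument and to isolate the point at which the driving process $B_{L_{(t-a)^+}}$ (rather than a standard Brownian motion) forces a modification. First I would set up the a priori estimates: for a solution triple $(x(\cdot),y(\cdot),z(\cdot))\in\mathcal{M}_a[0,T]$, apply It\^o's formula to $|x(t)|^2$, $|y(t)|^2$, and to the product $x(t)y(t)$, using that $B_{L_{(t-a)^+}}$ is a continuous martingale with quadratic variation $L_{(t-a)^+}$ and that $0\le dL_{(t-a)^+}/dt\le\kappa^{-1}$ by \eqref{subdiffusion}. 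The $d\langle B_{L_{(t-a)^+}}\rangle=dL_{(t-a)^+}$ terms generate exactly the $\int_0^T|z(t)|^2\,dL_{(t-a)^+}$ norm in the definition of $\mathcal{M}_a[0,T]$, and the Lipschitz bounds together with the integrability of $b(0),\sigma(0),\dots$ in Assumption \ref{assumption}(i) close the estimates, so that solutions are bounded in $\mathcal{M}_a[0,T]$ in terms of the data.

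Next I would introduce the family of FBSDEs interpolating between \eqref{FBSDE driven by sub-diffusion} and a trivial, explicitly solvable system: for $\lambda\in[0,1]$ replace $b,\sigma,\delta,f,h,\phi$ by $\lambda$ times the original coefficients plus $(1-\lambda)$ times simple monotone reference coefficients (e.g.\ $-y$, $0$, $0$, $-x$, $0$, $x$, chosen to satisfy the monotonicity in Assumption \ref{assumption}(ii)--(iii)), together with the original inhomogeneous terms. The key lemma is the stability estimate: if the system at parameter $\lambda_0$ is uniquely solvable for every admissible inhomogeneity, then so is the system at $\lambda_0+\epsilon_0$ for some fixed step $\epsilon_0>0$ independent of $\lambda_0$. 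This is proved by a contraction mapping argument in $\mathcal{M}_a[0,T]$, where the crucial cancellation again comes from applying It\^o's formula to $x(t)y(t)$ for the difference of two candidate solutions and invoking the monotonicity condition; the $dL_{(t-a)^+}$-terms in that expansion are controlled precisely by the second monotonicity inequality in Assumption \ref{assumption}(ii), and the sign condition on $\phi$ handles the boundary term $y(T)\cdot\delta x(T)$. Starting from $\lambda=0$, where the forward and backward equations decouple and are solved by the classical theory of SDEs and BSDEs driven by $B_{L_{(t-a)^+}}$ (the BSDE part is exactly the existence/uniqueness result of Zhang--Chen \cite{ZC2024-SICON-1}), finitely many steps of size $\epsilon_0$ reach $\lambda=1$, giving the desired unique solution; the case with Assumption \ref{assumption}(ii$'$)--(iii$'$) is symmetric after the sign change $y\mapsto-y$.

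The main obstacle, and the place where the sub-diffusion genuinely enters, is the handling of the backward equation and its $z$-component under the time-changed (and hence non-Markovian) driving noise. In the classical Brownian setting one freely uses the martingale representation theorem and the fact that $d\langle B\rangle_t=dt$; here $B_{L_{(t-a)^+}}$ is constant on the (infinitely many) intervals where $L$ is flat, so $dL_{(t-a)^+}$ is a genuinely random, time-inhomogeneous measure, and $z(\cdot)$ is only controlled in the weighted space $\mathbb{E}\int_0^T|z(t)|^2\,dL_{(t-a)^+}$. One must therefore run all It\^o-type computations with respect to this clock, verify that the martingale representation property needed for the BSDE at $\lambda=0$ holds with respect to the enlarged filtration $\{\mathcal{F}_t'\}$ (equivalently, pass through the Markovian lift $\widetilde X_t$ of \eqref{overshot}), and check that the uniqueness claim for $z$ is correctly stated in the $dL_{(t-a)^+}$-sense, since $z$ is not pinned down on the flat stretches of $L$. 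Once these sub-diffusion-specific facts are in place — all of which are available from \cite{ZC2024-SICON-1,ZC2024-JDE} — the continuation scheme goes through essentially as in the Brownian case, and Theorem \ref{th1} follows.
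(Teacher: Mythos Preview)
Your proposal is a reasonable sketch of the continuation method adapted to the sub-diffusion setting, and you correctly identify the key technical points (the $dL_{(t-a)^+}$-weighted norm for $z$, the martingale representation issue, and the role of the monotonicity conditions in the It\^o expansion of $x(t)y(t)$). However, the paper itself does not prove Theorem~\ref{th1} at all: it simply states the result with the preamble ``From \cite{ZC2024-JDE}, we have the following result,'' and gives no argument. So there is nothing to compare your approach against within this paper; the proof lives entirely in the cited reference \cite{ZC2024-JDE}, and your outline is consistent with what one expects that reference to contain.
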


\subsection{Stochastic optimal control problem for FBSDEs driven by sub-diffusion}

Let \( U \) be a set on \( \mathbb{R} \). We denote the quintuple \( (\Omega, \mathcal{F}, \mathbb{P}, X, v) \) as \( \mathcal{U}_a[0, T] \) if it satisfies:

i) \( \widetilde{X}_\cdot \) (defined in Lemma \ref{overshot}) is a Markov process with initial value \( \widetilde{X}_0 = (0, a) \).

ii) \( v(t, \omega) \) is a \( \mathcal{F}_t \)-progressively measurable stochastic process and \( \mathbb{E}\int_0^T v(t)^2 \, dt < \infty \). If \( v(t, \omega) \) is a \( \mathcal{F}'_t \)-progressively measurable stochastic process, we denote \( v(\cdot) \in \mathcal{U}_a^\prime[0, T] \). Since \( \mathcal{F}_t^\prime \subset \mathcal{F}_t \), it follows that \( \mathcal{U}_a^\prime[0, T] \subset \mathcal{U}_a[0, T] \). We call \( \mathcal{U}_a^\prime[0, T] \) the admissible control set.

Given \( v(\cdot) \in \mathcal{U}_a^\prime[0, T] \) and \( x_0 \in \mathbb{R} \), consider the controlled FBSDE driven by sub-diffusion:
\begin{equation}\label{controlled FBSDE driven by sub-diffusion}
\left\{
\begin{aligned}
	dx^v(t) &= b(t, x^v(t), y^v(t), v(t)) + \sigma(t, x^v(t), y^v(t), z^v(t)) dB_{L_{(t-a)+}}, \\
	-dy^v(t) &= f(t, x^v(t), y^v(t), v(t)) dt - z^v(t) dB_{L_{(t-a)+}}, \\
	x^v(0) &= x_0, \quad y^v(T) = \phi(x^v(T)),
\end{aligned}
\right.
\end{equation}
where
\[
(x^v(\cdot), y^v(\cdot), z^v(\cdot)) \in \mathbb{R} \times \mathbb{R} \times \mathbb{R}, \quad x_0 \in \mathbb{R},
\]
\[
b: [0, T] \times \mathbb{R} \times \mathbb{R} \times U \to \mathbb{R},
\]
\[
\sigma: [0, T] \times \mathbb{R} \times \mathbb{R} \times \mathbb{R} \to \mathbb{R}, \quad g: [0, T] \times \mathbb{R} \times \mathbb{R} \times U \to \mathbb{R}, \quad \phi: \mathbb{R} \to \mathbb{R}.
\]
The cost functional defined on \( \mathcal{U}_a^\prime[0, T] \) is
\begin{equation}\label{cost functional}
J(v(\cdot)) =\mathbb{E} \left[ \int_0^T g(t, x^v(t), y^v(t), v(t)) dt + h(x^v(T)) + \gamma(y^v(0)) \right],
\end{equation}
where
\[
g: [0, T] \times \mathbb{R} \times \mathbb{R} \times U \to \mathbb{R}, \quad h: \mathbb{R} \to \mathbb{R}, \quad \gamma: \mathbb{R} \to \mathbb{R}.
\]
The goal of the problem is to find an admissible control \( u(\cdot) \in \mathcal{U}_a^{\prime}[0, T] \) such that
\[
J(u(\cdot)) = \inf_{v(\cdot) \in\, \mathcal{U}_a^\prime[0, T]} J(v(\cdot)).
\]

We need the following assumption.
\begin{assumption}\label{assumption2}
i) \( b, \sigma, f, g, h, \gamma, \phi \) are continuously differentiable with respect to their variables, their partial derivatives are bounded and Lipschitz continuous. The minimum values of these Lipschitz constants and bounds are denoted by some constant \( L \).

ii) \( b, \sigma, f \) satisfy the monotonicity condition of Assumption \ref{assumption} ii).

iii) \( \phi \) is a non-decreasing function.
\end{assumption}

From Theorem \ref{th1}, it follows that for all \( v(\cdot) \in \mathcal{U}_a^\prime[0, T] \), the FBSDE (\ref{controlled FBSDE driven by sub-diffusion}) admits a unique adapted solution \( (x^v(\cdot), y^v(\cdot), z^v(\cdot)) \in \mathcal{M}_a[0, T] \).

\section{Stochastic maximum principle for FBSDEs driven by sub-diffusion}

We assume that the control domain $U$ can be non-convex, so $\mathcal{U}_a^\prime[0,T]$ can also be non-convex. To address this, we adopt a spike variation method (\cite{Peng1990}). Let $u(\cdot)$ be an optimal control and $u(\cdot) \in \mathcal{U}_a^\prime[0,T]$, and denote the corresponding adapted solution of (\ref{controlled FBSDE driven by sub-diffusion}) as $(x(\cdot), y(\cdot), z(\cdot))\equiv (x^u(\cdot), y^u(\cdot), z^u(\cdot))$. Make a slight perturbation to $u(\cdot)$ as follows:
$$
u^\varepsilon(t) := {u}(t)1_{E_\varepsilon^c}(t) + v(t)1_{E_\varepsilon}(t),\quad \text{for }t\in[0,T].
$$
Here, $E_\varepsilon$ is a Borel set on $[0,T]$, and the Lebesgue measure $|E_\varepsilon| = \varepsilon$ is sufficiently small. Since $v(\cdot) \in \mathcal{U}_a^\prime[0,T]$, it follows that $u^\varepsilon(\cdot) \in \mathcal{U}_a^\prime[0,T]$. Denote the adapted solution of (\ref{controlled FBSDE driven by sub-diffusion}) corresponding to $u^\varepsilon(\cdot)$ as $(x^\varepsilon(\cdot), y^\varepsilon(\cdot), z^\varepsilon(\cdot))$.

Now we introduce the variational equation:
\begin{equation}\label{variational equation}
	\left\{
	\begin{aligned}
		dx_1(t)&=\left[b_xx_1(t)+b_yy_1(t)+b(u^\varepsilon(t))-b(u(t))\right]dt \\
		&\quad +\left[\sigma_xx_1(t)+\sigma_yy_1(t)+\sigma_zz_1(t)\right]dB_{L_{(t-a)+}}, \\
		-dy_1(t)&=\left[f_xx_1(t)+f_yy_1(t)+f(u^\varepsilon(t))-f(u(t))\right]dt-z_1(t)dB_{L_{(t-a)+}}, \\
		x_1(0)&=0,\quad y_1(T)=\phi_x(x(T))x_1(T),
	\end{aligned}\right.
\end{equation}
where
\begin{equation*}
\begin{aligned}
    b(u^\varepsilon(t))&\equiv b(t, x^\varepsilon(t), y^\varepsilon(t), v^\varepsilon(t)),\quad b(u(t))\equiv b(t,x(t),y(t),u(t)),\\
    b_x&\equiv b_x(t,x(t),y(t),u(t)),\quad b_y\equiv b_y(t,x(t),y(t),u(t)),\quad \text{for }b=b,f,g,
\end{aligned}
\end{equation*}
and
\begin{equation*}
    \sigma_x\equiv \sigma_x(t,x(t),y(t),z(t)),\quad \sigma_y\equiv \sigma_y(t,x(t),y(t),z(t)),\quad \sigma_z\equiv \sigma_z(t,x(t),y(t),z(t)).
\end{equation*}
It is obvious (\ref{variational equation}) satisfies i) and iii) of Assumption \ref{assumption2}. We will introduce a lemma to show that it also satisfies ii).

\begin{mylem}\label{lemma3.1}
Given $t,x,y$, for any $\xi_1,\xi_2 \in \mathbb{R}$, we have
\begin{equation}\label{lemma3.1-1}
	\xi_2\nabla b(t,x,y)\cdot(\xi_1,\xi_2)-\xi_1\nabla f(t,x,y)\cdot(\xi_1,\xi_2) \leqslant -C(\xi_1^2+\xi_2^2).
\end{equation}
Given $t,x,y,z$, for any $\xi_1,\xi_2,\xi_3 \in \mathbb{R}$, we have
\begin{equation}\label{lemma3.1-2}
	\xi_3\nabla\sigma(t,x,y,z)\cdot(\xi_1,\xi_2,\xi_3) \leqslant -C(\xi_1^2+\xi_2^2+\xi_3^2).
\end{equation}
In the above, $\nabla$ denote the gradient operator of a multivariate function, and $\cdot$ denote the dot product of two vectors.
\end{mylem}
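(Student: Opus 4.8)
The plan is to deduce both (\ref{lemma3.1-1}) and (\ref{lemma3.1-2}) directly from the monotonicity conditions of Assumption \ref{assumption} ii), which $b,\sigma,f$ inherit via Assumption \ref{assumption2} ii) with $\delta\equiv 0$ and $h\equiv 0$ (these terms being absent from the controlled system (\ref{controlled FBSDE driven by sub-diffusion})), by a first-order perturbation argument. The idea is to insert into the monotonicity inequalities pairs of arguments that differ by $\rho$ times the prescribed direction vector, divide by $\rho^{2}$, and let $\rho\downarrow 0$, exploiting the continuous differentiability of $b,f,\sigma$ (Assumption \ref{assumption2} i)) so that the difference quotients that appear converge to the corresponding directional derivatives.

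In detail, for (\ref{lemma3.1-1}) I would fix $t,x,y$ and $\xi_1,\xi_2\in\mathbb{R}$ (the case $\xi_1=\xi_2=0$ is trivial), and for $\rho>0$ apply the first inequality of Assumption \ref{assumption} ii) with $x_1=x+\rho\xi_1$, $x_2=x$, $y_1=y+\rho\xi_2$, $y_2=y$. Its right-hand side is $-C\rho^{2}(\xi_1^{2}+\xi_2^{2})$, so after dividing by $\rho^{2}$ one gets
\[
\frac{b(t,x+\rho\xi_1,y+\rho\xi_2)-b(t,x,y)}{\rho}\,\xi_2-\frac{f(t,x+\rho\xi_1,y+\rho\xi_2)-f(t,x,y)}{\rho}\,\xi_1\leqslant -C(\xi_1^{2}+\xi_2^{2}).
\]
Letting $\rho\downarrow 0$, the two difference quotients tend to $\nabla b(t,x,y)\cdot(\xi_1,\xi_2)$ and $\nabla f(t,x,y)\cdot(\xi_1,\xi_2)$, and since a non-strict inequality passes to the limit, (\ref{lemma3.1-1}) follows. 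For (\ref{lemma3.1-2}) I would run the same scheme on the second inequality of Assumption \ref{assumption} ii) with $\delta\equiv h\equiv 0$: take $x_1=x+\rho\xi_1$, $y_1=y+\rho\xi_2$, $z_1=z+\rho\xi_3$, $x_2=x$, $y_2=y$, $z_2=z$, divide by $\rho^{2}$, and pass to the limit using the differentiability of $\sigma$.

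I do not expect a genuine obstacle here; the argument is routine. The one point worth checking carefully is the matching of scales: the quadratic right-hand side of the monotonicity condition is of order $\rho^{2}$, which is precisely the order obtained on the left by multiplying a first-order increment of $b$ (resp.\ $f$, $\sigma$) --- equal to $\rho$ times the directional derivative plus $o(\rho)$ --- against a first-order increment of $y$ (resp.\ $x$, $z$), which is $O(\rho)$; hence every error term is $o(\rho^{2})$ and disappears in the limit. This is also the reason one truly needs continuous differentiability rather than mere Lipschitz continuity of the coefficients, the latter only bounding the difference quotients rather than forcing their convergence.
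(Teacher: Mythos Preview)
Your proposal is correct and follows essentially the same route as the paper's own proof: both apply the monotonicity inequality of Assumption~\ref{assumption}~ii) at the perturbed pair $(x+\varepsilon\xi_1,y+\varepsilon\xi_2)$ versus $(x,y)$, observe that both sides scale like $\varepsilon^{2}$, and pass to the limit $\varepsilon\to 0$ using the $C^{1}$-regularity of the coefficients. The only cosmetic difference is that the paper inserts and subtracts intermediate values (e.g.\ $b(t,x,y+\xi_2\varepsilon)$) to isolate the individual partial derivatives $b_x,b_y,f_x,f_y$, whereas you recognise the full difference quotient directly as a directional derivative; your formulation is slightly more compact but the underlying idea is identical.
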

\begin{proof}
	Given $t,x,y$, for any $\varepsilon>0$, by the monotonicity condition we have
	$$
\begin{aligned}
	&\big(b(t,x+\xi_1\varepsilon,y+\xi_2\varepsilon)-b(t,x,y)\big)\xi_2\varepsilon -\big(g(t,x+\xi_1\varepsilon,y+\xi_2\varepsilon)-g(t,x,y)\big)\xi_1\varepsilon \\
    &\leqslant -C\varepsilon^2\bigl(\xi_1^2+\xi_2^2\bigr).
	\end{aligned}
    $$
	That is,
	$$
	\begin{aligned}
		&\big(b(t,x+\xi_1\varepsilon,y+\xi_2\varepsilon)-b(t,x,y+\xi_2\varepsilon)+b(t,x,y+\xi_2\varepsilon)-b(t,x,y)\big)\xi_2\varepsilon\\
		&-\big(g(t,x+\xi_1\varepsilon,y+\xi_2\varepsilon)-g(t,x+\xi_1\varepsilon,y)+g(t,x+\xi_1\varepsilon,y)-g(t,x,y)\big)\xi_1\varepsilon\\
		&\leqslant -C\varepsilon^2\bigl(\xi_1^2+\xi_2^2\bigr).
	\end{aligned}
	$$
	Taking $\varepsilon\rightarrow 0$, we can prove (\ref{lemma3.1-1}). Similarly, (\ref{lemma3.1-2}) can be proved.
\end{proof}

Thus, the variational equation satisfies Assumption (\ref{assumption2}), and there exists a unique adapted solution $(x_1(\cdot),y_1(\cdot),z_1(\cdot))\in \mathcal{M}_a[0, T]$. First, we have the following two lemmas.

\begin{mylem}\label{lemma3.2}
Let Assumption \ref{assumption2} hold. Then the following estimations hold for some positive constant $C$:
$$
\int_0^T \mathbb{E}| x_1(t)|^2 dt \leqslant C\varepsilon,\quad \int_0^T \mathbb{E}| y_1(t)|^2 dt \leqslant C\varepsilon,\quad \int_0^T \mathbb{E}| z_1(t)|^2 dL_{(t-a)+} \leqslant C\varepsilon.
$$
\begin{proof}
    The variational equation (\ref{variational equation}) satisfies the monotonicity condition of Assumption (\ref{assumption}), so by Ito's formula we have (Some time variables are omitted when no ambiguity occurs.)
	$$
	\begin{aligned}
		x_1(T)y_1(T)=& \int_0^T y_1 \Big\{\left[b_xx_1+b_yy_1+b(u^\varepsilon)-b(u)\right]dt +\left(\sigma_xx_1+\sigma_yy_1+\sigma_zz_1\right)dB_{L_{(t-a)+}}\Big\}\\
        & -\int_0^T x_1 \Big\{\left[f_xx_1+f_yy_1+f(u^\varepsilon)-f(u)\right]dt-z_1dB_{L_{(t-a)+}}\Big\}\\
		& +\int_0^T z_1\left(\sigma_xx_1+\sigma_yy_1+\sigma_zz_1\right)dL_{(t-a)+}\\
		=&\int_0^T \left[y_1\left(b_xx_1+b_yy_1\right)-x_1\left(f_xx_1+f_yy_1\right)\right]dt\\
		&+\int_0^T \left[y_1\left(b(u^\varepsilon)-b(u)\right)-x_1\left(f(u^\varepsilon)-f(u)\right)\right]dt\\
		&+\int_0^T z_1\left[{\sigma}_xx_1+\sigma_yy_1+\sigma_zz_1\right]dL_{(t-a)+}\\
		&+\int_0^T \left[y_1\left(\sigma_xx_1+\sigma_yy_1+\sigma_zz_1\right)+x_1\left(f_xx_1+f_yy_1\right)\right]dB_{L_{(t-a)+}}\\
		\leqslant& -C\int_0^T \left[x_1^2 dt + y_1^2 dt + z_1^2 dL_{(t-a)+}\right]\\
		&+\int_0^T \left[y_1\left(b(u^\varepsilon)-b(u)\right)-x_1\left(f(u^\varepsilon)-f(u)\right)\right]dt.
	\end{aligned}
	$$
	Taking expectations on both sides, considering the martingale property of sub-diffusion, we obtain
	\begin{equation}\label{important inequality}
		\begin{aligned}
			&C\mathbb{E}\int_0^T \Big[\left(x_1^2 + y_1^2\right)dt + z_1dL_{(t-a)+}\Big] + \mathbb{E}\left[\phi_x\left(x(T)\right) x_1(T)^2\right]\\
			&\leqslant \mathbb{E}\int_0^T \left[y_1\left(b(u^\varepsilon)-b(u)\right)-x_1\left(f(u^\varepsilon)-f(u)\right)\right]dt \\
			&\leqslant \mathbb{E}\int_0^T \left[\frac{1}{C} \left(b(u^{\varepsilon}) - b(u)\right)^2 + \frac{C}{4} y_1^2\right] dt
             + \mathbb{E}\int_0^T \left[\frac{1}{C} \left(f(u^{\varepsilon}) - f(u)\right)^2 + \frac{C}{4} x_1^2\right] dt.
		\end{aligned}
	\end{equation}
	Therefore, there exists a constant $C \geqslant 0$ such that
	\begin{equation*}
		\mathbb{E}\int_0^T \left(x_1(t)^2 + y_1(t)^2\right) dt + \mathbb{E}\int_0^Tz_1(t)^2 dL_{(t-a)+}
        + \mathbb{E}\left(\phi_{x}\left(x(T)\right) x_1(T)^2\right) \leqslant C\varepsilon.
	\end{equation*}
    The proof is complete.
	\end{proof}
\end{mylem}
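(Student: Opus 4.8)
System (\ref{variational equation}) is a genuinely coupled linear FBSDE — the forward equation feeds on $y_1$ through $b_y,\sigma_y$, the backward one feeds on $x_1$, and the terminal datum links $y_1(T)$ to $x_1(T)$ — so $x_1$ and $y_1$ cannot be controlled in isolation by a Gr\"onwall argument. The tool I would use is the forward–backward duality identity: apply It\^o's formula to the product $x_1(t)y_1(t)$. Since $B_{L_{(t-a)+}}$ is a continuous martingale with $\langle B_{L_{(t-a)+}}\rangle=L_{(t-a)+}$, the cross-variation of the two martingale parts contributes a term $z_1(\sigma_xx_1+\sigma_yy_1+\sigma_zz_1)\,dL_{(t-a)+}$; using $x_1(0)=0$, $y_1(T)=\phi_x(x(T))x_1(T)$ and integrating over $[0,T]$ produces an identity expressing $\phi_x(x(T))x_1(T)^2$ as a $dt$-drift contribution, a $dL_{(t-a)+}$-drift contribution, and a $dB_{L_{(t-a)+}}$-stochastic integral.

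The two drift groups are precisely the quantities Lemma \ref{lemma3.1} controls: the $dt$-drift equals $y_1(b_xx_1+b_yy_1)-x_1(f_xx_1+f_yy_1)\leqslant -C(x_1^2+y_1^2)$ by (\ref{lemma3.1-1}), and the $dL_{(t-a)+}$-drift equals $z_1(\sigma_xx_1+\sigma_yy_1+\sigma_zz_1)\leqslant -C(x_1^2+y_1^2+z_1^2)$ by (\ref{lemma3.1-2}); being pointwise inequalities, they may be integrated respectively against $dt$ and against the nonnegative random measure $dL_{(t-a)+}$. Taking expectations annihilates the stochastic integral — after the standard localization by $\tau_n=\inf\{t:|x_1(t)|+|y_1(t)|>n\}\wedge T$ and a passage to the limit, since membership in $\mathcal{M}_a[0,T]$ only furnishes second moments — and, $\phi$ being non-decreasing by Assumption \ref{assumption2} iii), the boundary term $\mathbb{E}[\phi_x(x(T))x_1(T)^2]$ is nonnegative. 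Rearranging gives
\[
C\,\mathbb{E}\!\int_0^T\!\big(x_1^2+y_1^2\big)\,dt+C\,\mathbb{E}\!\int_0^T\! z_1^2\,dL_{(t-a)+}+\mathbb{E}\big[\phi_x(x(T))x_1(T)^2\big]\leqslant \mathbb{E}\!\int_0^T\!\big[y_1\big(b(u^\varepsilon)-b(u)\big)-x_1\big(f(u^\varepsilon)-f(u)\big)\big]\,dt,
\]
and Young's inequality in the form $y_1\big(b(u^\varepsilon)-b(u)\big)\leqslant \tfrac C4 y_1^2+\tfrac1C\big(b(u^\varepsilon)-b(u)\big)^2$ (and likewise for the $f$-term) lets me absorb $\tfrac C4(x_1^2+y_1^2)$ into the left-hand side, reducing everything to the inhomogeneous bound $\mathbb{E}\int_0^T\big[(b(u^\varepsilon)-b(u))^2+(f(u^\varepsilon)-f(u))^2\big]\,dt\leqslant C\varepsilon$.

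This last bound is where $\varepsilon$ enters: $u^\varepsilon$ coincides with $u$ off $E_\varepsilon$ and $|E_\varepsilon|=\varepsilon$, so the integrand is — up to a Lipschitz-in-state discrepancy majorized by $C\,\mathbb{E}\int_0^T(|x^\varepsilon-x|^2+|y^\varepsilon-y|^2)\,dt$ — concentrated on $E_\varepsilon$, and the estimate then follows from the square-integrability of the coefficients along the optimal state granted by Assumption \ref{assumption2}, together with the companion bound $\mathbb{E}\int_0^T(|x^\varepsilon-x|^2+|y^\varepsilon-y|^2)\,dt=O(\varepsilon)$, which the same product-and-monotonicity method delivers (and which is not needed at all if (\ref{variational equation}) is read with its coefficients frozen at the optimal trajectory). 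I expect the genuinely delicate points to be (i) carrying out the It\^o product computation and matching its two drift groups to Lemma \ref{lemma3.1} with the correct signs, and (ii) the localization required to discard the martingale term; Young's inequality, the absorption and the $O(\varepsilon)$ spike estimate are then routine.
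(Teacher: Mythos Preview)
Your proposal is correct and follows essentially the same route as the paper: It\^o's formula applied to $x_1(\cdot)y_1(\cdot)$, the monotonicity estimates of Lemma \ref{lemma3.1} on the $dt$- and $dL_{(t-a)+}$-drift groups, nonnegativity of the boundary term via $\phi_x\geqslant0$, and Young's inequality to absorb the cross terms and reduce to the $O(\varepsilon)$ spike bound. You are in fact slightly more scrupulous than the paper on two points it leaves implicit --- the localization needed to kill the $dB_{L_{(t-a)+}}$-integral, and the reading of $b(u^\varepsilon)-b(u)$ (the paper treats it as supported on $E_\varepsilon$, i.e., with state frozen at the optimal trajectory, so your parenthetical interpretation is the intended one and no auxiliary $x^\varepsilon-x$ estimate is required).
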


\begin{mylem}\label{lemma3.3}
Let Assumption \ref{assumption2} hold. Then the following estimations hold for some positive constant $C$:
\begin{equation*}
\begin{gathered}
	\sup_{0 \leqslant t \leqslant T} \mathbb{E}|x_1(t)|^2 \leqslant C\varepsilon, \quad \sup_{0 \leqslant t \leqslant T} \mathbb{E}|y_1(t)|^2 \leqslant C\varepsilon, \\
	\mathbb{E}\left(\sup_{0 \leqslant t \leqslant T} \left|x_1(t)\right|^2\right) \leqslant C\varepsilon, \quad \mathbb{E}\left(\sup_{0 \leqslant t \leqslant T} \left|y_1(t)\right|^2\right) \leqslant C\varepsilon.
\end{gathered}
\end{equation*}
\begin{proof}
    Integrating the BSDE in (\ref{variational equation}) over the interval $[t, T]$, we get
    \begin{equation}\label{BSDE}
    \begin{aligned}
	    y_1(t) + \int_t^T z_1(s) dB_{L_{(s-a)+}} = \phi_x(x(T)) x_1(T) + \int_t^T \left(f_x x_1 + f_y y_1 + f(u^\epsilon) - f(u)\right) ds.
    \end{aligned}
    \end{equation}
    Noting that
    \[
    \mathbb{E}\left[y_1(t) \int_t^T z_1(s) dB_{L_{(s-a)+}}\right] = 0,\quad \text{for any }t\in[0,T].
    \]
    Taking expectations after squaring both sides of (\ref{BSDE}) and applying Lemma \ref{lemma3.2}, by It\^o's isometry, there exists some constant $C_1 \geqslant 0$ such that:
    \begin{equation*}
	\begin{aligned}
		&\mathbb{E}|y_1(t)|^2 + \mathbb{E}\int_t^T \left(z_1(s)\right)^2 dL_{(s-a)+} \\
        &= \mathbb{E}\bigg[\phi_x(x(T)) x_1(T)+ \int_t^T \left(f_x x_1 + f_y y_1 + f(u^\epsilon) - f(u)\right) ds\bigg]^2 \\
		&\leqslant 4\mathbb{E}\left[\left(\phi_x(x(T)) x_1(T)\right)^2 + \int_t^T \Big[(L x_1)^2 + (L y_1)^2 + (f(u^\epsilon) - f(u))^2\Big] ds\right] \\
		&\leqslant C_1 \mathbb{E}\left[x_1^2(T) + \int_t^T \Big[x_1^2(s) + y_1^2(s) + \left(f(u^\epsilon) - f(u)\right)^2\Big] ds\right] \leqslant C\varepsilon.
	\end{aligned}
    \end{equation*}
    Thus,
    \[
    \sup_{0 \leqslant t \leqslant T} \mathbb{E}|y_1(t)|^2 \leqslant C\varepsilon.
    \]
    Integrating the forward SDE in (\ref{variational equation}) over the interval $[0, t]$ using a similar method, we could obtain
    \[
    \sup_{0 \leqslant t \leqslant T} \mathbb{E}|x_1(t)|^2 \leqslant C\varepsilon.
    \]

    By (\ref{BSDE}), since
    \begin{equation*}
	y_1(t) = \phi_x(x(T)) x_1(T) + \int_t^T \left(f_x x_1 + f_y y_1 + f(u^\epsilon) - f(u)\right) ds - \int_t^T z_1 dB_{L_{(s-a)+}},
    \end{equation*}
    using Cauchy-Schwarz's inequality, there exists $C_2 \geqslant 0$ such that
    \begin{equation*}
	\begin{aligned}
		y_1(t)^2 &\leqslant 4\left(\phi_x(x(T)) x_1(T)\right)^2 + 4\left(\int_t^T \left(f_x x_1 + f_y y_1 + f(u^\epsilon) - f(u)\right) ds\right)^2 \\
		&\quad + 4\left(\int_0^t z_1 dB_{L_{(s-a)+}}\right)^2 + 4\left(\int_0^T z_1 dB_{L_{(s-a)+}}\right)^2 \\
		&\leqslant 4\left(\phi_x(x(T)) x_1(T)\right)^2 + 4(T - t) \int_t^T \left(f_x x_1 + f_y y_1 + f(u^\epsilon) - f(u)\right)^2 ds \\
		&\quad + 4\left(\int_0^t z_1 dB_{L_{(s-a)+}}\right)^2 + 4\left(\int_0^T z_1 dB_{L_{(s-a)+}}\right)^2 \\
		&\leqslant 4\left(\phi_x(x(T)) x_1(T)\right)^2 + C_2 \int_0^T \left(x_1^2 + y_1^2 + \left(f(u^\epsilon) - f(u)\right)^2\right) ds \\
		&\quad + 4\left(\int_0^t z_1 dB_{L_{(s-a)+}}\right)^2 + 4\left(\int_0^T z_1 dB_{L_{(s-a)+}}\right)^2.
	\end{aligned}
    \end{equation*}
    Then
    \begin{equation*}
	\begin{aligned}
		\sup_{0 \leqslant t \leqslant T} y_1(t)^2 &\leqslant 4\left(\phi_x(x(T)) x_1(T)\right)^2 + C_2 \int_0^T \left(x_1^2 + y_1^2 + \left(f(u^\epsilon) - f(u)\right)^2\right) ds \\
		&\quad + 4\sup_{0 \leqslant t \leqslant T} \left(\int_0^t z_1 dB_{L_{(s-a)+}}\right)^2 + 4\left(\int_0^T z_1 dB_{L_{(s-a)+}}\right)^2.
	\end{aligned}
    \end{equation*}
    Taking expectations on both sides and applying Burkholder-Davis-Gundy's inequality, there exists $C_3 \geqslant 0$ such that
    \begin{equation*}
	\begin{aligned}
		\mathbb{E}\left(\sup_{0 \leqslant t \leqslant T} y_1(t)^2\right)
        &\leqslant 4\mathbb{E}\left(\phi_x(x(T)) x_1(T)\right)^2 + C_2 \mathbb{E}\int_0^T \left(x_1^2 + y_1^2 + \left(f(u^\epsilon) - f(u)\right)^2\right) ds \\
		&\quad + 4\mathbb{E}\left(\sup_{0 \leqslant t \leqslant T} \int_0^t z_1 dB_{L_{(s-a)+}}\right)^2 + 4\mathbb{E}\left(\int_0^T z_1 dB_{L_{(s-a)+}}\right)^2 \\
		&\leqslant 4\mathbb{E}\left(\phi_x(x(T)) x_1(T)\right)^2 + C_2 \mathbb{E}\left(\int_0^T \left(x_1^2 + y_1^2 + \left(f(u^\epsilon) - f(u)\right)^2\right) ds\right) \\
		&\quad + C_3 \mathbb{E}\left(\int_0^t z_1^2 dL_{(s-a)+}\right) + 4\mathbb{E}\left(\int_0^T z_1^2 dL_{(s-a)+}\right).
	\end{aligned}
    \end{equation*}
    By Lemma \ref{lemma3.2}, there exists $C \geqslant 0$ such that
    \[
    \mathbb{E}\left(\sup_{0 \leqslant t \leqslant T} y_1(t)^2\right) \leqslant C\varepsilon.
    \]
    Using a similar method, we obtain
    \[
    \mathbb{E}\left(\sup_{0 \leqslant t \leqslant T} x_1(t)^2\right) \leqslant C\varepsilon.
    \]
    The proof is complete.
\end{proof}
\end{mylem}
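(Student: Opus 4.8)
The plan is to derive the four estimates directly from the integral forms of the $x_1$- and $y_1$-equations in (\ref{variational equation}), using Lemma \ref{lemma3.2} to absorb every time- or $L$-integral of $x_1^2,y_1^2,z_1^2$ (and of $(b(u^\varepsilon)-b(u))^2$, $(f(u^\varepsilon)-f(u))^2$, as already used in proving Lemma \ref{lemma3.2}) into an $O(\varepsilon)$ term; no Gronwall argument is needed, since the monotonicity-based computation behind Lemma \ref{lemma3.2} has already done the coupled work. Whenever a $dt$-bound from Lemma \ref{lemma3.2} has to be read as a $dL_{(t-a)+}$-bound, I would use $0\leqslant dL_{(t-a)+}/dt\leqslant\kappa^{-1}$ from (\ref{subdiffusion}).

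For $\sup_{0\leqslant t\leqslant T}\mathbb{E}|x_1(t)|^2$ I would integrate the forward equation over $[0,t]$, square, take expectations, apply the It\^o isometry to the $dB_{L_{(s-a)+}}$-integral (recall $\langle B_{L_\cdot}\rangle=L_\cdot$) and Cauchy--Schwarz to the $ds$-integral, then bound each piece by $C\varepsilon$ using boundedness of the partial derivatives (Assumption \ref{assumption2}), the clock comparison, and Lemma \ref{lemma3.2}; in particular this gives $\mathbb{E}|x_1(T)|^2\leqslant C\varepsilon$. For $\sup_{0\leqslant t\leqslant T}\mathbb{E}|y_1(t)|^2$ I would integrate the backward equation over $[t,T]$, obtaining
\[
y_1(t)+\int_t^T z_1(s)\,dB_{L_{(s-a)+}}=\phi_x(x(T))x_1(T)+\int_t^T\big(f_xx_1+f_yy_1+f(u^\varepsilon)-f(u)\big)\,ds,
\]
square it, take expectations, and use $\mathbb{E}\big[y_1(t)\int_t^T z_1\,dB_{L_{(s-a)+}}\big]=0$ so that the left side becomes $\mathbb{E}|y_1(t)|^2+\mathbb{E}\int_t^T z_1^2\,dL_{(s-a)+}$; the right side is controlled by $L^2\mathbb{E}|x_1(T)|^2$ (since $|\phi_x|\leqslant L$) plus, after Cauchy--Schwarz, a time-integral handled by Lemma \ref{lemma3.2}, giving the claim.

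For the pathwise-supremum bounds I would use the same two representations but take $\sup_{0\leqslant t\leqslant T}$ inside the square before taking expectations. The drift integrals are harmless because $\sup_{0\leqslant t\leqslant T}\big|\int_0^t(\cdots)\,ds\big|^2\leqslant T\int_0^T|\cdots|^2\,ds$. The stochastic-integral terms are treated by the Burkholder--Davis--Gundy inequality for the continuous martingale $B_{L_\cdot}$, which turns $\mathbb{E}\sup_{0\leqslant t\leqslant T}\big|\int_0^t(\sigma_xx_1+\sigma_yy_1+\sigma_zz_1)\,dB_{L_{(s-a)+}}\big|^2$ into $C\,\mathbb{E}\int_0^T(\sigma_xx_1+\sigma_yy_1+\sigma_zz_1)^2\,dL_{(s-a)+}\leqslant C\varepsilon$, and similarly handles $\mathbb{E}\sup_{0\leqslant t\leqslant T}\big|\int_0^t z_1\,dB_{L_{(s-a)+}}\big|^2$ after splitting $\int_t^T=\int_0^T-\int_0^t$ in the backward representation. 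Together with $\mathbb{E}|x_1(T)|^2\leqslant C\varepsilon$ and Lemma \ref{lemma3.2}, this yields $\mathbb{E}\big(\sup_{0\leqslant t\leqslant T}|x_1(t)|^2\big)\leqslant C\varepsilon$ and $\mathbb{E}\big(\sup_{0\leqslant t\leqslant T}|y_1(t)|^2\big)\leqslant C\varepsilon$.

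The argument is the classical chain of a priori estimates for linear FBSDEs, so the only real point of attention --- rather than a genuine obstacle --- is that all the stochastic calculus is run along the subordinated clock $L_{(t-a)+}$: one must apply the It\^o isometry and the Burkholder--Davis--Gundy inequality in the form appropriate to the martingale $B_{L_\cdot}$ with quadratic variation $L_{(t-a)+}$, and keep the two time scales consistent by invoking $0\leqslant dL_{(t-a)+}/dt\leqslant\kappa^{-1}$ each time a $dt$-estimate from Lemma \ref{lemma3.2} must be converted into a $dL_{(t-a)+}$-estimate.
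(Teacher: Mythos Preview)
Your proposal is correct and follows essentially the same route as the paper's proof: integrate the forward and backward equations of (\ref{variational equation}), square, take expectations (using the It\^o isometry and the orthogonality $\mathbb{E}\big[y_1(t)\int_t^T z_1\,dB_{L_{(s-a)+}}\big]=0$) for the pointwise-in-$t$ bounds, and then use the same representations together with the Burkholder--Davis--Gundy inequality (after splitting $\int_t^T=\int_0^T-\int_0^t$) for the pathwise-supremum bounds. Your ordering --- establishing $\sup_t\mathbb{E}|x_1(t)|^2\leqslant C\varepsilon$ first so that $\mathbb{E}|x_1(T)|^2$ is available when bounding the terminal term $\phi_x(x(T))x_1(T)$ in the $y_1$-estimate --- and your explicit appeal to $0\leqslant dL_{(t-a)+}/dt\leqslant\kappa^{-1}$ when converting $dt$-bounds from Lemma~\ref{lemma3.2} into $dL_{(t-a)+}$-bounds for the diffusion integrand $(\sigma_xx_1+\sigma_yy_1+\sigma_zz_1)^2$, are points the paper leaves implicit but which are indeed needed.
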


However, we also need the following more elaborate estimations, than those in Lemma \ref{lemma3.2} and Lemma \ref{lemma3.3}.
\begin{mylem}\label{lemma3.4}
Let Assumption \ref{assumption2} hold. Then the following estimations hold for some positive constant $C$:
$$
\begin{gathered}
	\mathbb{E}\int_0^T\left|x_1(t)\right|^2dt\leqslant C\varepsilon^{\frac32},\quad \mathbb{E}\int_0^T\left|y_1(t)\right|^2dt\leqslant C\varepsilon^{\frac32},\quad
	\mathbb{E}\int_0^T\left|z_1(t)\right|^2dL_{(t-a)+}\leqslant C\varepsilon^{\frac32},\\
    \sup_{0\leqslant t\leqslant T}\left(\mathbb{E}\left|x_1(t)\right|^2\right)\leqslant C\varepsilon^{\frac32},\quad
	\sup_{0\leqslant t\leqslant T}\left(\mathbb{E}\left|y_1(t)\right|^2\right)\leqslant C\varepsilon^{\frac32},\\
	\sup_{0\leqslant t\leqslant T}\left(\mathbb{E}\left|x_1(t)\right|^4\right)\leqslant C\varepsilon^3, \quad
	\sup_{0\leqslant t\leqslant T}\left(\mathbb{E}\left|y_1(t)\right|^4\right)\leqslant C\varepsilon^3.
\end{gathered}
$$	
\begin{proof}
    By the first inequality of (\ref{important inequality}), we have
	$$
		\begin{aligned}
			&C\mathbb{E}\int_0^T \left[\left(x_1^2 + y_1^2\right) dt + z_1^2 dL_{(t-a)+}\right] + \mathbb{E}\left[\phi_x(x(T)) x_1(T)^2\right] \\
			&\leqslant \mathbb{E}\int_0^T \left[y_1\left(b(u^\varepsilon) - b(u)\right) - x_1\left(f(u^\varepsilon) - f(u)\right)\right] dt\\
			&\leqslant \mathbb{E}\left[\sup_{0 \leqslant t \leqslant T} \left|y_1(t)\right| \int_0^T \left|b(u^\varepsilon) - b(u)\right| dt\right]
              + \mathbb{E}\left[\sup_{0 \leqslant t \leqslant T} \left|x_1(t)\right| \int_0^T \left|f(u^\varepsilon) - f(u)\right| dt\right] \\
			&\leqslant \left[\mathbb{E}\left(\sup_{0 \leqslant t \leqslant T} \left|y_1(t)\right|^2\right)\right]^{\frac{1}{2}}
              \left[\mathbb{E}\left(\int_0^T \left|b(u^\varepsilon) - b(u)\right| dt\right)^2\right]^{\frac{1}{2}} \\
			&\quad + \left[\mathbb{E}\left(\sup_{0 \leqslant t \leqslant T} \left|x_1(t)\right|^2\right)\right]^{\frac{1}{2}}
              \left[\mathbb{E}\left(\int_0^T \left|f(u^\varepsilon) - f(u)\right| dt\right)^2\right]^{\frac{1}{2}}\\
			&\leqslant C\varepsilon^{\frac{3}{2}}.
		\end{aligned}
	$$
    So that
	$$
		\mathbb{E}\int_0^T\left|x_1(t)\right|^2dt\leqslant C\varepsilon^{\frac32},\quad \mathbb{E}\int_0^T\left|y_1(t)\right|^2dt\leqslant C\varepsilon^{\frac32},\quad
		\mathbb{E}\int_0^T\left|z_1(t)\right|^2dL_{(t-a)+}\leqslant C\varepsilon^{\frac32}.
	$$
    By the BSDE in (\ref{variational equation}), we get
	$$
	\begin{aligned}
		&\mathbb{E}|y_1(t)|^2+\mathbb{E}\int_t^T\left(z_1(s)\right)^2dL_{(s-a)+}\\
	    &=\mathbb{E}\left(\phi_x(x(T))x_1(T)+\int_t^T\left(f_xx_1+f_yy_1+f(u^\epsilon)-f(u)\right)ds\right)^2\\
        &\leqslant C\mathbb{E}\left[(x_1(T))^2+\int_t^T\left(x_1^2+y_1^2\right)ds+\int_t^T(f(u^\varepsilon)-f(u))ds\right]^2\leqslant C\varepsilon^\frac32.
	\end{aligned}
	$$
    Hence, we obtain
	$$
    \sup_{0 \leqslant t \leqslant T} \left(\mathbb{E}\left|y_1(t)\right|^2\right) \leqslant C\varepsilon^{\frac{3}{2}}.
	$$
    Similarly, by the in (\ref{variational equation}), we get
	$$
    \sup_{0 \leqslant t \leqslant T} \left(\mathbb{E}\left|x_1(t)\right|^2\right) \leqslant C\varepsilon^{\frac{3}{2}}.
	$$
    The last two estimations could be proved similarly. The proof is complete.
\end{proof}
\end{mylem}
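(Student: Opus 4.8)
The plan is to rerun the energy identity that produces the first inequality in \eqref{important inequality}, but to estimate the two ``spike'' cross terms on its right-hand side by the path bounds of Lemma \ref{lemma3.3} instead of by Young's inequality, which is what gives the extra half-power of $\varepsilon$. Concretely, starting from
\[
C\,\mathbb{E}\int_0^T\!\!\big[(x_1^2+y_1^2)\,dt+z_1^2\,dL_{(t-a)+}\big]+\mathbb{E}\big[\phi_x(x(T))\,x_1(T)^2\big]\le\mathbb{E}\int_0^T\!\!\big[y_1(b(u^\varepsilon)-b(u))-x_1(f(u^\varepsilon)-f(u))\big]\,dt,
\]
I would bound the right-hand side above by
\[
\Big[\mathbb{E}\!\sup_{0\le t\le T}\!|y_1(t)|^2\Big]^{1/2}\Big[\mathbb{E}\big(\textstyle\int_0^T|b(u^\varepsilon)-b(u)|\,dt\big)^2\Big]^{1/2}+\Big[\mathbb{E}\!\sup_{0\le t\le T}\!|x_1(t)|^2\Big]^{1/2}\Big[\mathbb{E}\big(\textstyle\int_0^T|f(u^\varepsilon)-f(u)|\,dt\big)^2\Big]^{1/2}.
\]
By Lemma \ref{lemma3.3} the first factor of each product is $O(\varepsilon^{1/2})$; since $b,f$ have bounded $x,y$-derivatives and $b(u^\varepsilon)-b(u)$ is supported on $E_\varepsilon$ with $|E_\varepsilon|=\varepsilon$, we have $\int_0^T|b(u^\varepsilon)-b(u)|\,dt\le C\varepsilon$ pathwise, so the second factor is $O(\varepsilon)$. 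Hence the right-hand side is $O(\varepsilon^{3/2})$, and because $\phi_x\ge0$ the left-hand side dominates $C\,\mathbb{E}\int_0^T[(x_1^2+y_1^2)\,dt+z_1^2\,dL_{(t-a)+}]$; this yields the first three estimates together with $\mathbb{E}[\phi_x(x(T))x_1(T)^2]\le C\varepsilon^{3/2}$.

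For the pointwise-in-$t$ second moments I would use the forward SDE in \eqref{variational equation} first: squaring, taking expectations, and applying It\^o's isometry (valid with $dL_{(t-a)+}$ in place of $dt$ since $\langle B_{L_{(t-a)+}}\rangle=L_{(t-a)+}$), together with \eqref{subdiffusion} to replace $\int(x_1^2+y_1^2)\,dL_{(s-a)+}$ by $\kappa^{-1}\int(x_1^2+y_1^2)\,ds$, gives
\[
\mathbb{E}|x_1(t)|^2\le C\,\mathbb{E}\Big[\int_0^t(x_1^2+y_1^2)\,ds+\big(\textstyle\int_0^t|b(u^\varepsilon)-b(u)|\,ds\big)^2+\int_0^t z_1^2\,dL_{(s-a)+}\Big],
\]
and each term on the right is $O(\varepsilon^{3/2})$ by the first step (the middle one is even $O(\varepsilon^2)$), so $\sup_{0\le t\le T}\mathbb{E}|x_1(t)|^2\le C\varepsilon^{3/2}$. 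Then, squaring the BSDE representation \eqref{BSDE}, taking expectations and using It\^o's isometry,
\[
\mathbb{E}|y_1(t)|^2+\mathbb{E}\int_t^T z_1^2\,dL_{(s-a)+}\le C\,\mathbb{E}\Big[x_1(T)^2+\int_t^T(x_1^2+y_1^2)\,ds+\big(\textstyle\int_t^T|f(u^\varepsilon)-f(u)|\,ds\big)^2\Big]\le C\varepsilon^{3/2},
\]
using the bound on $\mathbb{E}x_1(T)^2$ just obtained; hence $\sup_{0\le t\le T}\mathbb{E}|y_1(t)|^2\le C\varepsilon^{3/2}$.

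For the fourth moments I would apply It\^o's formula to $|x_1(t)|^4$ and $|y_1(t)|^4$ (equivalently, square the pointwise inequalities just derived), take expectations, and invoke the Burkholder--Davis--Gundy inequality — again with $dL_{(t-a)+}$ replacing $dt$ — together with Gronwall's lemma. The inhomogeneous contribution $\mathbb{E}\big(\int_0^T|b(u^\varepsilon)-b(u)|\,dt\big)^4\le C\varepsilon^4$ and the mixed spike terms such as $\mathbb{E}\big(\int_{E_\varepsilon}|y_1|\,|f(u^\varepsilon)-f(u)|\,dt\big)^2\le C\varepsilon^2\,\mathbb{E}\sup_t|y_1(t)|^2\le C\varepsilon^3$ are $O(\varepsilon^3)$ (or smaller) by the previous steps and Lemma \ref{lemma3.3}; the remaining terms are either absorbed by Gronwall or reduced, via the It\^o identity
\[
\int_t^T z_1^2\,dL_{(s-a)+}=y_1(T)^2-y_1(t)^2+2\int_t^T y_1\big(f_xx_1+f_yy_1+f(u^\varepsilon)-f(u)\big)\,ds-2\int_t^T y_1 z_1\,dB_{L_{(s-a)+}}
\]
and BDG applied to the bracket of $\int z_1\,dB_{L_{(s-a)+}}$, to the quantities already controlled. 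This yields $\sup_{0\le t\le T}\mathbb{E}|x_1(t)|^4\le C\varepsilon^3$ and $\sup_{0\le t\le T}\mathbb{E}|y_1(t)|^4\le C\varepsilon^3$.

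The main obstacle is the correct bookkeeping of the spike terms: the extra half-power of $\varepsilon$ must come from pairing $\sup_t|y_1|$, which is $O(\varepsilon^{1/2})$ in $L^2(\Omega)$ by Lemma \ref{lemma3.3}, with the $L^1$-in-time norm of the spike increment, which is $O(\varepsilon)$ pathwise — using Young's inequality as in Lemma \ref{lemma3.2} would only return $O(\varepsilon)$. For the fourth moments the delicate point is controlling $\mathbb{E}\big(\int_0^T z_1^2\,dL_{(t-a)+}\big)^2$ without any pathwise regularity of $z_1$, which is handled through the It\^o identity above and BDG. Apart from this, the sub-diffusive features enter only through \eqref{subdiffusion} (which lets one pass from $dL_{(t-a)+}$ to $dt$) and through $B_{L_{(t-a)+}}$ being a continuous martingale with bracket $L_{(t-a)+}$, so that It\^o isometry and BDG apply verbatim; no new analytic difficulty arises from the non-Markovian nature of the driving noise.
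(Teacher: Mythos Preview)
Your approach is essentially the same as the paper's: both start from the first line of \eqref{important inequality} and, instead of Young's inequality, pair the spike terms via Cauchy--Schwarz with $[\mathbb{E}\sup_t|y_1|^2]^{1/2}=O(\varepsilon^{1/2})$ from Lemma~\ref{lemma3.3} and $[\mathbb{E}(\int_0^T|b(u^\varepsilon)-b(u)|\,dt)^2]^{1/2}=O(\varepsilon)$, then feed the resulting $O(\varepsilon^{3/2})$ integral bounds into the squared forward/backward representations to get the pointwise-in-$t$ second moments. The only notable difference is that you do the forward SDE first and then the BSDE, whereas the paper treats the BSDE first; your ordering is actually cleaner, since the BSDE step needs $\mathbb{E}\,x_1(T)^2\le C\varepsilon^{3/2}$, which is immediately available from your forward estimate but is only implicit (via $\mathbb{E}[\phi_x(x(T))x_1(T)^2]$) in the paper's order. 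For the fourth moments the paper simply writes ``proved similarly'', so your more detailed sketch (It\^o on $|x_1|^4$, $|y_1|^4$, BDG with bracket $L_{(t-a)+}$, and the It\^o identity for $\int z_1^2\,dL$) goes beyond what the paper provides but is consistent with it.
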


By the preceding lemmas, we have the following result.
\begin{mylem}\label{lemma3.5}
Let Assumption \ref{assumption2} hold. Then the following inequalities hold:
$$
\begin{gathered}
	\sup\limits_{0\leqslant t\leqslant T}\left[\mathbb{E}\left|x^\varepsilon(t)-x(t)-x_1(t)\right|^2\right]\leqslant C_\varepsilon\varepsilon^2,\quad
    \sup\limits_{0\leqslant t\leqslant T}\left[\mathbb{E}\left|y^\varepsilon(t)-y(t)-y_1(t)\right|^2\right]\leqslant C_\varepsilon\varepsilon^2,\\ \mathbb{E}\int_0^T|z^\varepsilon(t)-z(t)-z_1(t)|^2dL_{(t-a)+}\leqslant C_\varepsilon\varepsilon^2,
\end{gathered}
$$
where $\lim\limits_{\varepsilon\to0}C_\varepsilon=0$.
\begin{proof}
	Define
	$$
		\widetilde{x}(t) \equiv x^\varepsilon(t) - x(t) - x_1(t), \quad \widetilde{y}_t \equiv y^\varepsilon(t) - y(t) - y_1(t), \quad
        \widetilde{z}(t) \equiv z^\varepsilon(t) - z(t) - z_1(t).
	$$
	Notice that
	$$
	\begin{aligned}
		&\int_0^t b(x + x_1, y + y_1, u^\varepsilon) ds + \int_0^t \sigma(x + x_1, y + y_1, z + z_1) dB_{L_{(s-a)+}} \\
		&= \int_0^t \left[ b(x, y, u^\varepsilon) + x_1 \int_0^1 b_x(x + \lambda x_1, y + \lambda y_1, u^\varepsilon) d\lambda \right.\\
        &\qquad\quad \left. +\ y_1 \int_0^1 b_y(x + \lambda x_1, y + \lambda y_1, u^\varepsilon) d\lambda \right] ds \\
		&\quad + \int_0^t \left[ \sigma(x, y, z) + x_1 \int_0^1 \sigma_x(x + \lambda x_1, y + \lambda y_1, z + \lambda z_1) d\lambda \right.\\
		&\qquad\qquad +\ y_1 \int_0^1 \sigma_y(x + \lambda x_1, y + \lambda y_1, z + \lambda z_1) \, d\lambda \\
        &\qquad\qquad \left.+\ z_1 \int_0^1 \sigma_{z}(x + \lambda x_1, y + \lambda y_1, z + \lambda z_1) \, d\lambda \right] dB_{L_{(s-a)+}} \\
		&= x(t) + x_1(t) - x_0 + \int_0^t A^\varepsilon(s) ds + \int_0^t B^\varepsilon(s) dB_{L_{(s-a)+}},
	\end{aligned}
	$$
	and similarly,
	$$
	\begin{aligned}
		&\int_t^T -f(x + x_1, y + y_1, u^\varepsilon) ds + \int_t^T (z + z_1) dB_{L_{(s-a)+}} \\
		&= -\int_t^T \left[ f(x, y, u^\varepsilon) + x_1 \int_0^1 f_x(x + \lambda x_1, y + \lambda y_1, u^\varepsilon) d\lambda \right.\\
		&\qquad\qquad \left.+\ y_1 \int_0^1 f_y(x + \lambda x_1, y + \lambda y_1, u^\varepsilon)  d\lambda \right] ds + \int_t^T (z + z_1) dB_{L_{(s-a)+}} \\
		&= \phi(x(T)) + \phi_x(x(T)) x_1(T) - y(t) - y_1(t) - \int_t^T C^\varepsilon(s) ds,
	\end{aligned}
	$$
	where
	$$
	\begin{aligned}
		A^\varepsilon(s) &:= x_1 \int_0^1 \left[ b_x(x + \lambda x_1, y + \lambda y_1, u^\varepsilon) - b_x \right] d\lambda \\
        &\quad + y_1 \int_0^1 \left[ b_y(x + \lambda x_1, y + \lambda y_1, u^\varepsilon) - b_y \right] d\lambda, \\
		B^\varepsilon(s) &:= x_1 \int_0^1 \left[ \sigma_x(x + \lambda x_1, y + \lambda y_1, z + \lambda z_1) - \sigma_x \right] d\lambda \\
		&\quad + y_1 \int_0^1 \left[ \sigma_y(x + \lambda x_1, y + \lambda y_1, z + \lambda z_1) - \sigma_y \right] d\lambda \\
		&\quad + z_1 \int_0^1 \left[ \sigma_z(x + \lambda x_1, y + \lambda y_1, z + \lambda z_1) - \sigma_z \right] d\lambda, \\
		C^\varepsilon(s) &:= x_1 \int_0^1 \left[ f_x(x + \lambda x_1, y + \lambda y_1, u^\varepsilon) - f_x \right] d\lambda \\
        &\quad + y_1 \int_0^1 \left[ f_y(x + \lambda x_1, y + \lambda y_1, u^\varepsilon) - f_y \right] d\lambda.
	\end{aligned}
	$$	
	For notational simplicity, when \(\psi = b(x, y, u), f(x, y, u)\), we define
	\[
	\begin{aligned}
		&\widetilde{\psi}_x := \int_0^1 \psi_x(x + x_1 + \lambda \widetilde{x}, y + y_1 + \lambda \widetilde{y}, u^\varepsilon) d\lambda, \\
		&\widetilde{\psi}_y := \int_0^1 \psi_y(x + x_1 + \lambda \widetilde{x}, y + y_1 + \lambda \widetilde{y}, u^\varepsilon) d\lambda,
	\end{aligned}
	\]
	and
	\[
	\begin{gathered}
		\widetilde{\sigma}_x := \int_0^1 \sigma_x(x + x_1 + \lambda \widetilde{x}, y + y_1 + \lambda \widetilde{y}, z + z_1 + \lambda \widetilde{z}) \, d\lambda, \\
		\widetilde{\sigma}_y := \int_0^1 \sigma_y(x + x_1 + \lambda \widetilde{x}, y + y_1 + \lambda \widetilde{y}, z + z_1 + \lambda \widetilde{z}) \, d\lambda, \\
		\widetilde{\sigma}_z := \int_0^1 \sigma_z(x + x_1 + \lambda \widetilde{x}, y + y_1 + \lambda \widetilde{y}, z + z_1 + \lambda \widetilde{z}) \, d\lambda.
	\end{gathered}
	\]
	By Assumption \ref{assumption2}, Lemma \ref{lemma3.4}, and \(|E_\varepsilon| = \varepsilon\), we get
	\[
	\begin{gathered}
		\mathbb{E}\int_0^t A^\varepsilon(s)^2 ds \leqslant C_\varepsilon \varepsilon^2, \quad
		\mathbb{E}\int_0^t B^\varepsilon(s)^2  dL_{(s-a)+} \leqslant C_\varepsilon \varepsilon^2, \quad
		\mathbb{E}\int_0^t C^\varepsilon(s)^2 ds \leqslant C_\varepsilon \varepsilon^2.
	\end{gathered}
	\]
    Since \(\widetilde{x}(\cdot),\widetilde{y}(\cdot),\widetilde{z}(\cdot)\) satisfy the following FBSDE with sub-diffusion:
    \begin{equation*}
    \left\{
    \begin{aligned}
	d\widetilde{x}(s) &= \big(\widetilde{b_x} \widetilde{x}(s) + \widetilde{b_y} \widetilde{y}(s) + A^\varepsilon(s)\big) ds
     + \big(\widetilde{\sigma}_x \widetilde{x}(s) + \widetilde{\sigma}_y \widetilde{y}(s) + \widetilde{\sigma}_z \widetilde{z}(s) + B^\varepsilon(s)\big) dB_{L_{(s-a)+}}, \\
	-d\widetilde{y}(s) &= \big(\widetilde{f_x} \widetilde{x}(s) + \widetilde{f_y} \widetilde{y}(s) + C^\varepsilon(s)\big) ds - \widetilde{z}(s) dB_{L_{(s-a)+}}, \\
	\widetilde{x}(0) &= 0, \quad \widetilde{y}(T) = \phi(x^\varepsilon(T)) - \phi(x(T) + x_1(T)),
    \end{aligned}
    \right.
    \end{equation*}
    applying It\^o's formula to \(\widetilde{x}(\cdot) \widetilde{y}(\cdot)\) on \([0, T]\) and using the monotonicity condition, we obtain
    \[
    \begin{aligned}
	\mathbb{E}\left[\widetilde{x}(T) \widetilde{y}(T)\right]
    &= \mathbb{E}\int_0^T \widetilde{y} \Big[\big(\widetilde{b_x} \widetilde{x} + \widetilde{b_y} \widetilde{y} + A^\varepsilon(s)\big) ds
    + \big(\widetilde{\sigma}_{x} \widetilde{x} + \widetilde{\sigma}_{y} \widetilde{y} + \widetilde{\sigma}_{z} \widetilde{z} + B^\varepsilon(s)\big) dB_{L_{(s-a)+}}\Big] \\
	&\quad - \mathbb{E}\int_0^T \widetilde{x} \Big[\big(\widetilde{f_x} \widetilde{x} + \widetilde{f_y} \widetilde{y} + C^\varepsilon(s)\big) ds
     - \widetilde{z} dB_{L_{(s-a)+}}\Big] \\
	&\quad + \mathbb{E}\int_0^T \widetilde{z} \big(B^\varepsilon(s) + \widetilde{\sigma}_x \widetilde{x} + \widetilde{\sigma}_y \widetilde{y}
     + \widetilde{\sigma}_z \widetilde{z}\big) dL_{(s-a)+} \\
	&\leqslant -C\mathbb{E}\int_0^T \left(\widetilde{x}^2 + \widetilde{y}^2\right) ds -C \mathbb{E}\int_0^T \widetilde{z}^2 dL_{(s-a)+} \\
	&\quad + \mathbb{E}\int_0^T \left(\widetilde{y} A^\varepsilon(s) - \widetilde{x} C^\varepsilon(s)\right) ds + \int_0^T \widetilde{z} B^\varepsilon(s) dL_{(s-a)+}.
    \end{aligned}
    \]
    Therefore
    \[
    \begin{aligned}
	&\mathbb{E}\left[\widetilde{x}(T) \widetilde{y}(T)\right] + \mathbb{E}\int_0^T c(\widetilde{x}^2 +\widetilde{y}^2) ds + \mathbb{E}\int_0^T c \widetilde{z}^2 dL_{(s-a)+}\\
	&\leqslant \frac{C}{4}\mathbb{E}\int_0^T \widetilde{y}^2 ds + \frac{1}{C}\mathbb{E}\int_0^T A^\varepsilon(s)^2 ds + \frac{C}{4}\mathbb{E}\int_0^T \widetilde{x}^2 ds\\
	&\quad  + \frac{1}{C}\mathbb{E}\int_0^T C^\varepsilon(s)^2 ds  + \frac{C}{4}\mathbb{E}\int_0^T \widetilde{z}^2 dL_{(s-a)+}
     + \frac{1}{C}\mathbb{E}\int_0^T B^\varepsilon(s)^2 dL_{(s-a)+}.
    \end{aligned}
    \]
    Thus
    \[
    \begin{aligned}
	&\mathbb{E}\left[\widetilde{x}(T) \widetilde{y}(T)\right] + \frac{3C}{4}\mathbb{E}\int_0^T (\widetilde{x}^2 + \widetilde{y}^2) ds
     + \frac{3C}{4} \mathbb{E}\int_0^T \widetilde{z}^2 dL_{(s-a)+} \\
	&\leqslant \frac{1}{C}\mathbb{E}\int_0^T A^\varepsilon(s)^2 ds + \frac{1}{C}\mathbb{E}\int_0^T C^\varepsilon(s)^2 ds
     + \frac{1}{C}\mathbb{E}\int_0^T B^\varepsilon(s)^2 dL_{(s-a)+} \leqslant C_\varepsilon \varepsilon^2.
    \end{aligned}
    \]
    Since \(\phi\) is a monotonic function, by Assumption \ref{assumption2}, we have
    \[
    \mathbb{E}\left[\widetilde{x}(T) \widetilde{y}(T)\right] = \mathbb{E}\left[\big(\phi(x_T^\varepsilon) - \phi(x_T + x_1(T))\big)(x_T^\varepsilon - (x_T + x_1(T)))\right] \geqslant 0.
    \]
    Thus, we obtain
    \[
    \begin{gathered}
	\mathbb{E}\int_0^T \widetilde{x}(s)^2 ds \leqslant C_\varepsilon \varepsilon^2, \quad \mathbb{E}\int_0^T \widetilde{y}(s)^2 ds \leqslant C_\varepsilon \varepsilon^2, \quad
    \mathbb{E}\int_0^T \widetilde{z}(s)^2 dL_{(s-a)+} \leqslant C_\varepsilon \varepsilon^2.
    \end{gathered}
    \]
    Applying the method of Lemma \ref{lemma3.3} once more, we obtain the conclusion.
\end{proof}
\end{mylem}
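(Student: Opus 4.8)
The plan is to run the classical linearization-plus-monotonicity scheme, adapted to the sub-diffusion setting. First I would introduce the remainder triple
$$\widetilde{x}(t):=x^\varepsilon(t)-x(t)-x_1(t),\quad \widetilde{y}(t):=y^\varepsilon(t)-y(t)-y_1(t),\quad \widetilde{z}(t):=z^\varepsilon(t)-z(t)-z_1(t),$$
and derive, via a first-order Taylor expansion written in the integral form ``$\int_0^1$ of the derivative along the segment'', the linear FBSDE driven by sub-diffusion that $(\widetilde{x},\widetilde{y},\widetilde{z})$ solves. Its drift and diffusion are linear in $(\widetilde{x},\widetilde{y},\widetilde{z})$ with measurable, bounded, Lipschitz coefficients $\widetilde{b}_x,\widetilde{b}_y,\widetilde{f}_x,\widetilde{f}_y,\widetilde{\sigma}_x,\widetilde{\sigma}_y,\widetilde{\sigma}_z$ (averages of the original partial derivatives), plus inhomogeneous source terms $A^\varepsilon,B^\varepsilon,C^\varepsilon$ collecting the mismatch between the partial derivatives evaluated at $(x+\lambda x_1,y+\lambda y_1,\cdot)$ and at the optimal trajectory, with terminal datum $\widetilde{y}(T)=\phi(x^\varepsilon(T))-\phi(x(T)+x_1(T))$.

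The core estimate, and the step I expect to be the main obstacle, is
$$\mathbb{E}\int_0^T (A^\varepsilon(s))^2\,ds+\mathbb{E}\int_0^T (C^\varepsilon(s))^2\,ds+\mathbb{E}\int_0^T (B^\varepsilon(s))^2\,dL_{(s-a)+}\leqslant C_\varepsilon\varepsilon^2,\qquad \lim_{\varepsilon\to0}C_\varepsilon=0.$$
I would prove it by splitting each integral over $E_\varepsilon$ and $E_\varepsilon^c$. On $E_\varepsilon^c$ the perturbed control coincides with $u$, so each derivative difference in $A^\varepsilon,B^\varepsilon,C^\varepsilon$ is controlled, by Lipschitz continuity of the derivatives (Assumption \ref{assumption2} i)), by $L\lambda(|x_1|+|y_1|+|z_1|)$; multiplying by $x_1,y_1,z_1$ and squaring produces fourth-order quantities, which are $O(\varepsilon^3)$ by the refined bounds of Lemma \ref{lemma3.4} (using $dL_{(s-a)+}\leqslant\kappa^{-1}\,ds$ from (\ref{subdiffusion}) to reduce $dL$-integrals to Lebesgue ones where $z_1$ is absent). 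On $E_\varepsilon$ only the uniform bound $L$ on the derivatives is available, but $|E_\varepsilon|=\varepsilon$, so these pieces are bounded by $\varepsilon\cdot\sup_{0\leqslant t\leqslant T}\mathbb{E}(|x_1(t)|^2+|y_1(t)|^2+\cdots)=O(\varepsilon^{5/2})$, again by Lemma \ref{lemma3.4}. Hence $C_\varepsilon=O(\varepsilon^{1/2})\to0$; squeezing out a genuine $o(1)\cdot\varepsilon^2$ rather than merely $O(\varepsilon^{3/2})$ is exactly what forces the use of the sharpened Lemma \ref{lemma3.4} instead of Lemmas \ref{lemma3.2}--\ref{lemma3.3}.

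Next I would apply the It\^o product rule for sub-diffusion-driven processes (whose quadratic variation is $L_{(t-a)+}$) to $\widetilde{x}(t)\widetilde{y}(t)$ on $[0,T]$, producing a cross term $\int_0^T\widetilde{z}(\widetilde{\sigma}_x\widetilde{x}+\widetilde{\sigma}_y\widetilde{y}+\widetilde{\sigma}_z\widetilde{z}+B^\varepsilon)\,dL_{(s-a)+}$. Taking expectation kills the $dB_{L_{(\cdot-a)+}}$ stochastic integrals, since $B_{L_{(\cdot-a)+}}$ is a martingale and the integrands lie in $\mathcal{M}_a[0,T]$. Applying the infinitesimal monotonicity of Lemma \ref{lemma3.1} — which, being uniform in the base point, may be integrated over $\lambda\in[0,1]$ and hence applies to the averaged coefficients — to the quadratic forms in $(\widetilde{x},\widetilde{y},\widetilde{z})$ yields
$$\mathbb{E}[\widetilde{x}(T)\widetilde{y}(T)]+C\,\mathbb{E}\int_0^T(\widetilde{x}^2+\widetilde{y}^2)\,ds+C\,\mathbb{E}\int_0^T\widetilde{z}^2\,dL_{(s-a)+}\leqslant\mathbb{E}\int_0^T(\widetilde{y}A^\varepsilon-\widetilde{x}C^\varepsilon)\,ds+\mathbb{E}\int_0^T\widetilde{z}B^\varepsilon\,dL_{(s-a)+}.$$
Young's inequality on the right absorbs half of each quadratic term on the left, leaving the source-term bound $C_\varepsilon\varepsilon^2$. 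The decisive sign observation is that, since $\phi$ is non-decreasing (Assumption \ref{assumption2} iii)),
$$\mathbb{E}[\widetilde{x}(T)\widetilde{y}(T)]=\mathbb{E}\big[(\phi(x^\varepsilon(T))-\phi(x(T)+x_1(T)))(x^\varepsilon(T)-x(T)-x_1(T))\big]\geqslant0,$$
so it may be dropped, giving $\mathbb{E}\int_0^T\widetilde{x}^2\,ds$, $\mathbb{E}\int_0^T\widetilde{y}^2\,ds$, $\mathbb{E}\int_0^T\widetilde{z}^2\,dL_{(s-a)+}\leqslant C_\varepsilon\varepsilon^2$. Finally I would upgrade to the pointwise bounds exactly as in Lemma \ref{lemma3.3}: from the forward equation for $\widetilde{x}$ on $[0,t]$ with It\^o's isometry with respect to $dL_{(\cdot-a)+}$ one gets $\mathbb{E}|\widetilde{x}(t)|^2\leqslant C\,\mathbb{E}\int_0^t(\widetilde{x}^2+\widetilde{y}^2+(A^\varepsilon)^2)\,ds+C\,\mathbb{E}\int_0^t(\widetilde{z}^2+(B^\varepsilon)^2)\,dL_{(s-a)+}\leqslant C_\varepsilon\varepsilon^2$; then from the backward equation for $\widetilde{y}$ on $[t,T]$, together with $\mathbb{E}|\widetilde{x}(T)|^2\leqslant C_\varepsilon\varepsilon^2$ and the Lipschitz bound on $\phi$, one obtains $\sup_{0\leqslant t\leqslant T}\mathbb{E}|\widetilde{y}(t)|^2\leqslant C_\varepsilon\varepsilon^2$. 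Since every constant retains the factor $C_\varepsilon\to0$, the lemma follows.
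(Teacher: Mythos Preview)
Your proposal is correct and follows essentially the same approach as the paper: derive the linear FBSDE for the remainder triple with source terms $A^\varepsilon,B^\varepsilon,C^\varepsilon$, bound these sources by $C_\varepsilon\varepsilon^2$ via Lemma \ref{lemma3.4}, apply It\^o's formula to $\widetilde{x}\widetilde{y}$ together with the monotonicity from Lemma \ref{lemma3.1} and the non-decreasing property of $\phi$, and then upgrade the integral bounds to pointwise ones by the method of Lemma \ref{lemma3.3}. Your $E_\varepsilon/E_\varepsilon^c$ decomposition for the source-term estimate simply makes explicit what the paper asserts in a single line.
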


We then have the following variational inequality.
\begin{mypro}\label{proposition3.1}
Let Assumption \ref{assumption2} hold. Then
\begin{equation}\label{variational inequality}
\begin{aligned}
&\mathbb{E}\int_0^T \left[ g_x x_1(t) + g_y y_1(t) + g(u^\varepsilon(t)) - g(u(t)) \right] dt\\
&+ \mathbb{E} \left[ h_x(x(T)) x_1(T) \right] + \mathbb{E} \left[ \gamma_y(y(0)) y_1(0) \right] \geqslant o(\varepsilon).
\end{aligned}
\end{equation}
\begin{proof}
	The fact that \( J(u^\varepsilon(\cdot)) \geqslant J(u(\cdot)) \) implies
	\begin{align*}
		& \mathbb{E}\int_0^T \left[ g(t, x^\varepsilon, y^\varepsilon, u^\varepsilon) - g(t, x + x_1, y + y_1, u^\varepsilon) \right] dt \\
		& + \mathbb{E}\int_0^T \left[ g(t, x + x_1, y + y_1, u^{\varepsilon}) - g(u) \right] dt \\
		& + \mathbb{E} \left[ h(x^\varepsilon(T)) - h(x(T) + x_1(T)) \right] + \mathbb{E}\left[ h(x(T) + x_1(T)) - h(x(T)) \right] \\
		& + \mathbb{E} \left[ \gamma(y^\varepsilon(0)) - \gamma(y(0) + y_1(0)) \right] + \mathbb{E} \left[ \gamma(y(0) + y_1(0)) - \gamma(y(0)) \right] \geqslant 0.
	\end{align*}
	By Assumption \ref{assumption2} and Lemma \ref{lemma3.5}, we get
	\begin{align*}
		& \mathbb{E}\int_{0}^{T} \left[ g(t, x^\varepsilon, y^\varepsilon, u^\varepsilon) - g(t, x + x_1, y + y_1, u^\varepsilon) \right] dt \\
		& + \mathbb{E} \left[ h(x^{\varepsilon}(T)) - h(x(T) + x_1(T)) \right]
		+ \mathbb{E} \left[ \gamma(y^{\varepsilon}(0)) - \gamma(y(0) + y_1(0)) \right] = o(\varepsilon).
	\end{align*}
	Thus, we have
	\begin{align*}
		0 \leqslant &\ \mathbb{E}\int_0^T \left[ g(t, x + x_1, y + y_1, u^\varepsilon) - g(u) \right] dt + \mathbb{E} \left[ h(x(T) + x_1(T)) - h(x(T)) \right] \\
		& + \mathbb{E} \left[ \gamma(y(0) + y_1(0)) - \gamma(y(0)) \right] + o(\varepsilon) \\
		= &\ \mathbb{E}\int_0^T \left[ g(t, x + x_1, y + y_1, u^\varepsilon) - g(t, x + x_1, y + y_1, u) \right] dt \\
		& + \mathbb{E}\int_0^T \left[ g(t, x + x_1, y + y_1, u) - g(u) \right] dt \\
		& + \mathbb{E} \left[ h(x(T) + x_1(T)) - h(x(T)) \right]
		+ \mathbb{E} \left[ \gamma(y(0) + y_1(0)) - \gamma(y(0)) \right] + C \varepsilon^{\frac{3}{2}} \\
		= &\ \mathbb{E}\int_0^T \left[ g_x x_1(t) + g_y y_1(t) + g(u^\varepsilon) - g(u) \right] dt \\
		& + \mathbb{E}\int_0^T \left[ (g_x(u^\varepsilon) - g_x) x_1 + (g_y(u^\varepsilon) - g_y) y_1 \right] dt \\
		& + \mathbb{E} \left[ h_x(x(T)) x_1(T) \right] + \mathbb{E} \left[ \gamma_y(y(0)) y_1(0) \right] + o(\varepsilon),
	\end{align*}
	where we have denoted $g_x(u^\varepsilon) := g_x(t, x, y, u^\varepsilon)$, $g_y(u^\varepsilon) := g_y(t, x, y, u^\varepsilon)$.

	Since
	\begin{align*}
		& \mathbb{E}\int_0^T \left[ (g_x(u^\varepsilon) - g_x) x_1 + (g_y(u^\varepsilon) - g_y) y_1 \right] dt \\
		\leqslant & \left[ \mathbb{E}\left( \int_0^T (g_x(u^\varepsilon) - g_x) dt \right)^2 \right]^{\frac{1}{2}}
        \left[ \mathbb{E}\left( \sup_{0 \leqslant t \leqslant T} \left| x_1(t) \right|^2 \right) \right]^{\frac{1}{2}} \\
		& + \left[ \mathbb{E}\left( \int_0^T (g_y(u^\varepsilon) - g_y) dt \right)^2 \right]^{\frac{1}{2}}
        \left[ \mathbb{E}\left( \sup_{0 \leqslant t \leqslant T} \left| y_1(t) \right|^2 \right) \right]^{\frac{1}{2}} = o(\varepsilon).
	\end{align*}
	we have
	$$
	\mathbb{E}\int_0^T \left[ g_x x_1 + g_y y_1 + g(u^\varepsilon) - g(u) \right] dt
	+ \mathbb{E} \left[ h_x(x(T)) x_1(T) \right]
	+ \mathbb{E} \left[ \gamma_y(y(0)) y_1(0) \right] \geqslant o(\varepsilon).
	$$
	Thus, (\ref{variational inequality}) holds.
\end{proof}
\end{mypro}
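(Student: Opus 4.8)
The plan is to start from the optimality inequality $J(u^\varepsilon(\cdot))-J(u(\cdot))\geqslant 0$ and to isolate the first-order term in $\varepsilon$ by inserting the ``predicted'' trajectory $(x(\cdot)+x_1(\cdot),\,y(\cdot)+y_1(\cdot))$ as an intermediate point in each of the three contributions to $J(u^\varepsilon)-J(u)$ (the running cost $g$, the terminal cost $h$, and the initial cost $\gamma$). Thus I split each contribution into a difference comparing the true perturbed state to the predicted state and a difference comparing the predicted state to the unperturbed state. For the first group of differences I apply the mean value theorem together with the boundedness of the derivatives of $g,h,\gamma$ (Assumption \ref{assumption2}) and Cauchy--Schwarz, and then invoke Lemma \ref{lemma3.5}: since $\sup_t\mathbb{E}|x^\varepsilon(t)-x(t)-x_1(t)|^2\leqslant C_\varepsilon\varepsilon^2$ with $C_\varepsilon\to 0$, and similarly for $y$ and $z$, each such term is $o(\varepsilon)$. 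This reduces matters to showing that the second group of differences equals the asserted left-hand side of (\ref{variational inequality}) modulo $o(\varepsilon)$.

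Next I Taylor-expand the second group. For $h(x(T)+x_1(T))-h(x(T))$ and $\gamma(y(0)+y_1(0))-\gamma(y(0))$, a first-order expansion produces $h_x(x(T))x_1(T)$ and $\gamma_y(y(0))y_1(0)$ plus quadratic remainders of size $O(x_1(T)^2)$ and $O(y_1(0)^2)$; by the refined bound $\sup_t\mathbb{E}|x_1(t)|^2\leqslant C\varepsilon^{3/2}$ of Lemma \ref{lemma3.4} these remainders are $o(\varepsilon)$. For the running cost I first separate the effect of the control change from that of the state change, writing $g(t,x+x_1,y+y_1,u^\varepsilon)-g(u)$ as $[g(t,x+x_1,y+y_1,u^\varepsilon)-g(t,x+x_1,y+y_1,u)]+[g(t,x+x_1,y+y_1,u)-g(u)]$; the second bracket expands to $g_xx_1+g_yy_1$ plus an $O(x_1^2+y_1^2)$ remainder which is again $o(\varepsilon)$ by Lemma \ref{lemma3.4}.

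The delicate point is the first bracket, which I compare with $g(u^\varepsilon)-g(u)=g(t,x,y,u^\varepsilon)-g(t,x,y,u)$: their difference equals $(g_x(u^\varepsilon)-g_x)x_1+(g_y(u^\varepsilon)-g_y)y_1$ up to an $O(x_1^2+y_1^2)$ term. The key observation is that $u^\varepsilon(t)=u(t)$ off $E_\varepsilon$, so $g_x(t,x,y,u^\varepsilon)-g_x$ is supported on $E_\varepsilon$ and, the derivatives being bounded, $\int_0^T|g_x(u^\varepsilon)-g_x|\,dt\leqslant 2L\varepsilon$; combining this with Cauchy--Schwarz and $\mathbb{E}\big(\sup_t|x_1(t)|^2\big)\leqslant C\varepsilon$ (Lemma \ref{lemma3.3}) bounds this cross term by $C\varepsilon^{3/2}=o(\varepsilon)$, and likewise for the $y_1$ cross term. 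I expect this cross-term estimate --- namely recognizing that one must exploit both the spike support of $g_x(u^\varepsilon)-g_x$ and the sharp $\varepsilon$-order bounds on $x_1,y_1$ rather than the crude $\sqrt\varepsilon$ ones --- to be the main obstacle; the rest is routine expansion and collection of terms. Gathering the surviving terms $\mathbb{E}\int_0^T[g_xx_1+g_yy_1+g(u^\varepsilon)-g(u)]\,dt+\mathbb{E}[h_x(x(T))x_1(T)]+\mathbb{E}[\gamma_y(y(0))y_1(0)]$ and absorbing every remainder into $o(\varepsilon)$ yields (\ref{variational inequality}).
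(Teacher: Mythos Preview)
Your proposal is correct and follows essentially the same route as the paper: start from $J(u^\varepsilon)\geqslant J(u)$, insert the predicted trajectory $(x+x_1,y+y_1)$ as an intermediate point, kill the first group of differences by Lemma~\ref{lemma3.5}, Taylor-expand the second group using Lemma~\ref{lemma3.4} for the quadratic remainders, and control the cross term $(g_x(u^\varepsilon)-g_x)x_1+(g_y(u^\varepsilon)-g_y)y_1$ via its support on $E_\varepsilon$ combined with $\mathbb{E}\big(\sup_t|x_1|^2\big)\leqslant C\varepsilon$ from Lemma~\ref{lemma3.3}. Your identification of this cross-term estimate as the delicate step matches the paper exactly.
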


To establish the stochastic maximum principle, we introduce the following adjoint equation:
\begin{equation}\label{adjoint equation}
	\left\{
	\begin{aligned}
		dp(t) &= \left[ f_y p(t) - b_y q(t) - g_y \right] dt - \sigma_y k(t) dL_{(t-a)+} - \sigma_z k(t) dB_{L_{(t-a)+}}, \\
		-dq(t) &= \left[ -f_x p(t) + b_x q(t) + g_x \right] dt + \sigma_x k(t) dL_{(t-a)+} - k(t) B_{L_{(t-a)+}}, \\
		p(0) &= -\gamma_y(y(0)), \quad q(T) = -\phi_x(x(T)) p(T) + h_x(x(T)).
	\end{aligned}
	\right.
\end{equation}

From Theorem \ref{th1}, there exists a unique adapted solution \( (p(\cdot), q(\cdot), k(\cdot)) \in \mathcal{M}_a[0, T]\) to (\ref{adjoint equation}). Define the Hamiltonian function as follows:
$$
H(t, x, y, z, v, p, q) := q(t) b(t, x, y, v) - p(t) f(t, x, y, v) + g(t, x, y, v).
$$

The next theorem provides the main result of this section.
\begin{mythm}\label{theorem3.1}
Let \( u(\cdot)\) be an optimal control, \( (x(\cdot), y(\cdot), z(\cdot)) \) the corresponding solution to the FBSDE with sub-diffusion (\ref{controlled FBSDE driven by sub-diffusion}), and \( (p(\cdot), q(\cdot), k(\cdot)) \) the solution to the adjoint equation (\ref{adjoint equation}). Then for any \( v(\cdot) \in \mathcal{U}_a^\prime[0, T] \), we have
\begin{equation}\label{SMP}
H(t, x(t), y(t), z(t), v(t), p(t), q(t)) \geqslant H(t, x(t), y(t), z(t), u(t), p(t), q(t)),\ a.e.\ t\in[0,T],\ a.s..
\end{equation}
\begin{proof}
	By applying Ito's formula to \( p(\cdot)y_1(\cdot) + q(\cdot)x_1(\cdot) \) over the interval \([0, T]\), we get
	\begin{align*}
		& p(T)y_1(T) + q(T)x_1(T) - p(0)y_1(0) - q(0)x_1(0) \\
		&= \int_0^T \Big\{-p\left[f_x x_1 + f_y y_1 + f(u^\varepsilon) - f(u)\right] dt + \left[f_y p - b_y q - g_y\right] y_1 dt \\
		& \qquad\quad - \left[k\sigma_y y_1 + k\sigma_z z_1\right] dL_{(t-a)+}\Big\} \\
		& \quad + \int_0^T \Big\{\left[b_x x_1 + b_y y_1 + b(u^\varepsilon) - b(u)\right] q dt + \left[f_x p - b_x q - g_x\right] x_1 dt \\
		& \qquad\qquad + \left[k\sigma_y y_1 + k\sigma_z z_1\right] dL_{(t-a)+}\Big\} + \text{martingale}.
	\end{align*}
	By substituting the adjoint equation (\ref{adjoint equation}) and taking expectations on both sides:
	\begin{align*}
		&\mathbb{E} \left[ h_x(x(T)) x_1(T) + \gamma_y(y(0)) y_1(0) \right]\\
		&= \mathbb{E} \int_0^T \Big\{-p \left[ f(u^\varepsilon) - f(u) \right] + q \left[ b(u^\varepsilon) - b(u) \right] - (g_x x_1 + g_y y_1)\Big\} dt.
	\end{align*}
	By Proposition \ref{proposition3.1}, we have
	$$
		\mathbb{E} \int_0^T \Big\{-p \left[ f(u^\varepsilon) - f(u) \right] + q \left[ b(u^\varepsilon) - b(u) \right] + g(u^\varepsilon) - g(u)\Big\} dt \geqslant o(\varepsilon).
	$$
	Combining this with the definition of the Hamiltonian, and letting \( \varepsilon \to 0 \), (\ref{SMP}) holds.
\end{proof}
\end{mythm}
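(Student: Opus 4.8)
The plan is to run the classical Peng-type spike-variation and duality argument, adapted here to the sub-diffusive clock $L_{(t-a)+}$. All the preparatory work is already in place: the variational equation (\ref{variational equation}) together with Lemma \ref{lemma3.1} guaranteeing its well-posedness, the $o(\varepsilon)$-estimates of Lemmas \ref{lemma3.2}--\ref{lemma3.5}, the first-order variational inequality of Proposition \ref{proposition3.1}, and the adjoint equation (\ref{adjoint equation}). The only genuinely new ingredient is the It\^o expansion of the bilinear form $p(\cdot)y_1(\cdot)+q(\cdot)x_1(\cdot)$ in this non-Markovian setting; everything else is bookkeeping made rigorous by the a priori estimates.

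First I fix an arbitrary $v(\cdot)\in\mathcal{U}_a^\prime[0,T]$, form the spike variation $u^\varepsilon=u\,1_{E_\varepsilon^c}+v\,1_{E_\varepsilon}$ with $|E_\varepsilon|=\varepsilon$, and recall Proposition \ref{proposition3.1}. Then I apply It\^o's formula to $p(\cdot)y_1(\cdot)+q(\cdot)x_1(\cdot)$ on $[0,T]$, where three points need attention. (a) Since $L_{(t-a)+}$ is continuous and of finite variation, the $dL_{(t-a)+}$-integrals in (\ref{adjoint equation}) feed only the finite-variation parts of $p$ and $q$, while the quadratic co-variation is carried by the $dB_{L_{(t-a)+}}$-martingale parts with $d\langle B_{L_{(t-a)+}}\rangle=dL_{(t-a)+}$. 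Collecting the $dL_{(t-a)+}$-terms in $d(p\,y_1)$ and $d(q\,x_1)$ --- each the sum of an explicit drift term from (\ref{adjoint equation}) and a cross-variation term --- one finds they are equal and opposite and hence cancel; this is exactly the purpose of the $-\sigma_yk\,dL_{(t-a)+}$ term in $dp$ and the $\sigma_xk\,dL_{(t-a)+}$ term in $-dq$. (b) The $dB_{L_{(t-a)+}}$-stochastic integrals are true martingales by Lemmas \ref{lemma3.2}--\ref{lemma3.3}, hence vanish under $\mathbb{E}$. (c) Using $x_1(0)=0$, $p(0)=-\gamma_y(y(0))$, $y_1(T)=\phi_x(x(T))x_1(T)$ and $q(T)=-\phi_x(x(T))p(T)+h_x(x(T))$, the boundary terms collapse to $\mathbb{E}[h_x(x(T))x_1(T)+\gamma_y(y(0))y_1(0)]$, while the linearized drift terms carrying $b_x,b_y,f_x,f_y$ cancel in pairs, leaving
$$\mathbb{E}\big[h_x(x(T))x_1(T)+\gamma_y(y(0))y_1(0)\big]=\mathbb{E}\!\int_0^T\!\big\{q\big(b(u^\varepsilon)-b(u)\big)-p\big(f(u^\varepsilon)-f(u)\big)-g_xx_1-g_yy_1\big\}\,dt.$$

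Substituting this identity into Proposition \ref{proposition3.1} cancels the $g_xx_1+g_yy_1$ terms, and by the definition of $H$ the inequality becomes $\mathbb{E}\int_0^T[H(t,x(t),y(t),z(t),u^\varepsilon(t),p(t),q(t))-H(t,x(t),y(t),z(t),u(t),p(t),q(t))]\,dt\geqslant o(\varepsilon)$. After replacing $(x^\varepsilon,y^\varepsilon)$ by $(x,y)$ at cost $o(\varepsilon)$ via Lemma \ref{lemma3.5}, the integrand is supported on $E_\varepsilon$, so this reads $\mathbb{E}\int_{E_\varepsilon}[H(t,x(t),y(t),z(t),v(t),p(t),q(t))-H(t,x(t),y(t),z(t),u(t),p(t),q(t))]\,dt\geqslant o(\varepsilon)$. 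Finally I localize: letting $E_\varepsilon$ shrink to a Lebesgue point of the integrable map $t\mapsto\mathbb{E}[H(t,x(t),y(t),z(t),v(t),p(t),q(t))-H(t,x(t),y(t),z(t),u(t),p(t),q(t))]$, dividing by $\varepsilon$ and sending $\varepsilon\to0$, and then running the usual separability/measurable-selection argument over all admissible $v(\cdot)$, I obtain (\ref{SMP}).

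The principal obstacle is the It\^o computation in step (a): correctly handling a product of processes driven simultaneously by $dt$, $dL_{(t-a)+}$ and $dB_{L_{(t-a)+}}$, and verifying that the $dL_{(t-a)+}$-contributions --- which have no analogue in the classical Brownian FBSDE duality --- cancel identically. This hinges on the identity $\langle B_{L_{(t-a)+}}\rangle=L_{(t-a)+}$ together with the precise structure of the adjoint equation (\ref{adjoint equation}). Once that is settled, the vanishing of the martingale terms, the pairwise cancellation of the linearized drifts, and the Lebesgue-point localization are all routine given the estimates of Lemmas \ref{lemma3.2}--\ref{lemma3.5}.
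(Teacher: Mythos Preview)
Your proposal is correct and follows essentially the same approach as the paper: apply It\^o's formula to $p(\cdot)y_1(\cdot)+q(\cdot)x_1(\cdot)$, use the boundary conditions of the adjoint equation (\ref{adjoint equation}) to reduce the boundary terms to $\mathbb{E}[h_x(x(T))x_1(T)+\gamma_y(y(0))y_1(0)]$, and then combine the resulting duality identity with Proposition \ref{proposition3.1} before localizing in $\varepsilon$. Your write-up is in fact more detailed than the paper's own proof --- you spell out the cancellation of the $dL_{(t-a)+}$-terms, the martingale vanishing, and the Lebesgue-point localization, all of which the paper compresses into a few lines.
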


\section{Stochastic maximum principle for FBSDEs driven by sub-diffusion with state constraints}

In practical problems, it is common to encounter situations with state constraints. This section is concerned with the stochastic maximum principle for FBSDEs driven by sub-diffusion with state constraints.

For the readers' convenience, we first introduce the following Ekeland's variational principle, which can be seen, for example, in Yong and Zhou \cite{YZ1999}.

\begin{mylem}\label{Ekeland variational principle}
Let \( V \) be a complete metric space, and let \( f \) be a lower semi-continuous function defined on \( V \), i.e., for any \( v \in V \), \( \varliminf\limits_{v \to v_0} f(v) \geqslant f(v_0) \). Suppose \( \inf\limits_{v \in V} f(v) > -\infty \). For any given \( \varepsilon \geqslant 0 \), there exists \( u \in V \) such that $f(u) \leqslant \inf\limits_{v \in V} f(v) + \varepsilon$, then there exists, for any \( \lambda \geqslant 0 \), a point \( u^\varepsilon \in V \) such that:
$$
\begin{aligned}
	&\mathrm{i}) \quad f(u^{\varepsilon}) \leqslant f(u), \\
	&\mathrm{ii}) \quad d(u^{\varepsilon}, u) \leqslant \lambda, \\
	&\mathrm{iii}) \quad f(u^{\varepsilon}) \leqslant f(w) + \frac{\varepsilon}{\lambda} d(w, u^{\varepsilon}), \quad \forall w \in V.
\end{aligned}
$$
\end{mylem}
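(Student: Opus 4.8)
This is the classical Ekeland variational principle in a complete metric space, so the plan is to reproduce the standard proof; nothing earlier in the paper is needed. I would first set aside the degenerate cases --- if $\varepsilon=0$ then $u$ is already a global minimizer and $u^\varepsilon:=u$ works, and if $\lambda=0$ one again takes $u^\varepsilon:=u$ --- so from now on $\varepsilon>0$ and $\lambda>0$. The device driving the argument is the partial order on $V$ given by
\[
w\preceq v \quad\Longleftrightarrow\quad f(w)+\tfrac{\varepsilon}{\lambda}\,d(w,v)\leqslant f(v).
\]
Step one is to verify that this is indeed a partial order: reflexivity is clear, antisymmetry follows since $w\preceq v$ and $v\preceq w$ together give $\tfrac{\varepsilon}{\lambda}d(w,v)\leqslant f(v)-f(w)\leqslant-\tfrac{\varepsilon}{\lambda}d(w,v)$ and hence $d(w,v)=0$, and transitivity comes from the triangle inequality for $d$. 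I would also record the observation that, for each $v$, the lower set $S(v):=\{w\in V:\ w\preceq v\}$ is closed, being the $f(v)$-sublevel set of the lower semi-continuous map $w\mapsto f(w)+\tfrac{\varepsilon}{\lambda}d(w,v)$ (a sum of the l.s.c.\ function $f$ and the continuous function $d(\cdot,v)$).

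Step two is the usual nested-sets construction. Put $u_0:=u$, $S_0:=S(u_0)$, and inductively, given $u_n$ and $S_n:=S(u_n)$, choose $u_{n+1}\in S_n$ with $f(u_{n+1})\leqslant\inf_{S_n}f+2^{-n}$ (possible because $S_n$ is nonempty and $f$ is bounded below on it), and set $S_{n+1}:=S(u_{n+1})$. Since $u_{n+1}\preceq u_n$, transitivity gives $S_{n+1}\subseteq S_n$. For $w\in S_{n+1}$ one has $\tfrac{\varepsilon}{\lambda}d(w,u_{n+1})\leqslant f(u_{n+1})-f(w)\leqslant 2^{-n}$, where the last inequality uses $f(w)\geqslant\inf_{S_n}f$ because $w\in S_{n+1}\subseteq S_n$; hence $\mathrm{diam}(S_{n+1})\leqslant 2\lambda2^{-n}/\varepsilon\to 0$. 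By completeness of $V$ and Cantor's intersection theorem, $\bigcap_{n\geqslant0}S_n=\{u^\varepsilon\}$ for a unique $u^\varepsilon$, and this is the point I would take.

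Step three is to read off i)--iii). From $u^\varepsilon\in S_0$, i.e.\ $u^\varepsilon\preceq u$, I get $f(u^\varepsilon)+\tfrac{\varepsilon}{\lambda}d(u^\varepsilon,u)\leqslant f(u)$, which is i) immediately; combined with $f(u)\leqslant\inf_V f+\varepsilon\leqslant f(u^\varepsilon)+\varepsilon$ it gives $\tfrac{\varepsilon}{\lambda}d(u^\varepsilon,u)\leqslant\varepsilon$, i.e.\ ii). For iii), if some $w$ satisfied $w\preceq u^\varepsilon$, then since $u^\varepsilon\in S_{n+1}$ gives $u^\varepsilon\preceq u_{n+1}$, transitivity would force $w\preceq u_{n+1}$, i.e.\ $w\in S_{n+1}$, for every $n$, so $w\in\bigcap_n S_n=\{u^\varepsilon\}$; hence $w\preceq u^\varepsilon$ forces $w=u^\varepsilon$, which is precisely $f(u^\varepsilon)<f(w)+\tfrac{\varepsilon}{\lambda}d(w,u^\varepsilon)$ for all $w\neq u^\varepsilon$, and iii) follows (it is trivial at $w=u^\varepsilon$).

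Since this is a textbook fact, I do not expect a real obstacle, only care with bookkeeping. The subtlest point is the diameter bound, where $f(w)$ must be compared to $\inf_{S_n}f$ rather than $\inf_{S_{n+1}}f$ via the inclusion $S_{n+1}\subseteq S_n$; and one must use lower semi-continuity (not continuity) of $f$ to get $S(v)$ closed --- which is exactly the hypothesis at hand --- together with completeness to make the nested intersection a singleton.
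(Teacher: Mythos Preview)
Your proof is correct and is the standard argument for Ekeland's variational principle via the partial order $w\preceq v\iff f(w)+\tfrac{\varepsilon}{\lambda}d(w,v)\leqslant f(v)$ together with a Cantor-type nested intersection. The paper, however, does not actually prove this lemma at all: it is stated with a reference to Yong and Zhou \cite{YZ1999} and used as a black box. So there is nothing to compare on the level of argument --- you supply a complete proof where the paper simply cites the literature, which is entirely appropriate since the result is classical and not a contribution of the paper.

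One minor remark on your write-up: in Step three, when you argue iii), you write ``since $u^\varepsilon\in S_{n+1}$ gives $u^\varepsilon\preceq u_{n+1}$''; it would be slightly cleaner to say $u^\varepsilon\in S_n=S(u_n)$ for every $n$, hence $u^\varepsilon\preceq u_n$, and then $w\preceq u^\varepsilon\preceq u_n$ gives $w\in S_n$ for all $n$. This is what you mean, and the conclusion is unaffected.
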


Now, we introduce a stochastic optimal control problem of FBSDEs with sub-diffusion with state constraints. Considering the following controlled forward-backward stochastic system:
\begin{equation}\label{controlled forward-backward stochastic system}
\left\{
	\begin{aligned}
		dx^v(t) &= b(t, x^v(t), y^v(t), v(t)) dt + \sigma(t, x^v(t), y^v(t), z^v(t)) dB_{L_{(t-a)+}}, \\
		-dy^v(t) &= f(t, x^v(t), y^v(t), v(t)) dt - z^v(t) dB_{L_{(t-a)+}}, \\
		x^v(0) &= x_0, \quad y^v(T) = \phi(x^v(T)),
	\end{aligned}
\right.
\end{equation}
where
$$ (x^v(\cdot), y^v(\cdot), z^v(\cdot)) \in \mathbb{R} \times \mathbb{R} \times \mathbb{R}, \quad x_0 \in \mathbb{R}, $$
$$ b: [0, T] \times \mathbb{R} \times \mathbb{R} \times U \to \mathbb{R}, \quad \sigma: [0, T] \times \mathbb{R} \times \mathbb{R} \times \mathbb{R} \to \mathbb{R}, $$
$$ g: [0, T] \times \mathbb{R} \times \mathbb{R} \times U \to \mathbb{R}, \quad \phi: \mathbb{R} \to \mathbb{R}. $$

Unlike the previous section, here we assume that \( \{ v(t) \mid t \in [0, T], v(t) \in \mathcal{U}_a^\prime[0, T] \} \) is a bounded closed set, and the state variables \( x^v(T) \) and \( y^v(0) \) satisfy the following constraints:
\begin{equation}\label{state constraints}
\mathbb{E}[G_1(x^v(T))] = 0, \quad \mathbb{E}[G_0(y^v(0))] = 0,
\end{equation}
where \( G_1 \) and \( G_0 \) are continuously differentiable functions from \( \mathbb{R} \) to \( \mathbb{R} \) with bounded derivatives.

The cost functional defined on \( \mathcal{U}_a^\prime[0, T] \) is
$$
J(v(\cdot)) = \mathbb{E} \left[ \int_0^T g\big(t, x^v(t), y^v(t), v(t)\big) dt + h\big(x^v(T)\big) + \gamma\big(y^v(0)\big) \right],
$$
where \( g: [0, T] \times \mathbb{R} \times \mathbb{R} \times U \to \mathbb{R} \), \( h: \mathbb{R} \to \mathbb{R} \), and \( \gamma: \mathbb{R} \to \mathbb{R} \).

The goal in this section is to find an admissible control \( u(\cdot) \in \mathcal{U}_a^\prime[0, T] \) such that
$$
J(u(\cdot)) = \inf_{v(\cdot) \in\, \mathcal{U}_a^\prime[0, T]} J(v(\cdot)).
$$
	
In order to apply Lemma \ref{Ekeland variational principle}, we define a metric on the control domain \( \mathcal{U}_a^\prime[0,T] \) as follows:
$$
d(v_1(\cdot), v_2(\cdot)) := \mathbb{E}\left[\mu\{t \in [0, T] \mid v_1(t) \neq v_2(t)\}\right], \quad \forall v_1(\cdot), v_2(\cdot) \in \mathcal{U}_a^\prime[0,T],
$$
where \( \mu \) denotes the Lebesgue measure. It can be shown that under this metric, \( \mathcal{U}_{a}^{\prime}[0,T] \) is complete.

Let \( u(\cdot) \) be an optimal control, and \( (x(\cdot), y(\cdot), z(\cdot))\equiv (x^u(\cdot), y^u(\cdot), z^u(\cdot)) \in \mathcal{M}_a[0, T]\) be the corresponding solution to (\ref{controlled forward-backward stochastic system}). For any \( v(\cdot) \in \mathcal{U}_{a}^{\prime}[0,T] \), define
$$
J_\rho(v(\cdot)) := \left\{|\mathbb{E}[G_1(x^v(T))]|^2 + |\mathbb{E}[G_0(y^v(0))]|^2 + [J(v(\cdot)) - J(u(\cdot)) + \rho]^2\right\}^{\frac{1}{2}}.
$$
Obviously
$$
J_\rho(u(\cdot)) \leqslant \inf_{v(\cdot) \in\, \mathcal{U}_a^\prime[0,T]} J_\rho(v(\cdot)) + \rho.
$$
Thus, by Lemma \ref{Ekeland variational principle}, we obtain the existence of \( u_\rho(\cdot) \in \mathcal{U}_a^\prime[0,T] \) such that:
$$
\begin{aligned}
	&\mathrm{i}) \quad J_\rho(u_\rho(\cdot)) \leqslant J_\rho(u(\cdot)) = \rho, \\
	&\mathrm{ii}) \quad d(u_\rho(\cdot), u(\cdot)) \leqslant \sqrt{\rho}, \\
	&\mathrm{iii}) \quad J_\rho(u_\rho(\cdot)) \leqslant J_\rho(w(\cdot)) + \sqrt{\rho} \, d(u_\rho(\cdot), w(\cdot)), \quad \forall w(\cdot) \in \mathcal{U}_a^\prime[0,T].
\end{aligned}
$$
We denote the solution to (\ref{controlled forward-backward stochastic system}) with respect to $u_\rho(\cdot)$ as $(x_\rho(\cdot), y_\rho(\cdot), z_\rho(\cdot))$.
Considering a spike variation \( u_\rho^\varepsilon(\cdot) \) of \( u_\rho(\cdot) \) defined by
$$
u_\rho^\varepsilon(t) := u_\rho(t) 1_{E_\varepsilon^c}(t) + v(t) 1_{E_\varepsilon}(t),\quad t\in[0,T],
$$
and let \( (x_\rho^\varepsilon(\cdot), y_\rho^\varepsilon(\cdot), z_\rho^\varepsilon(\cdot)) \) be the solution to (\ref{controlled forward-backward stochastic system}) with respect to $u_\rho^\varepsilon(\cdot)$. Then we obtain from the above that
$$
J_\rho(u_\rho^\varepsilon(\cdot)) - J_\rho(u_\rho(\cdot)) + \sqrt{\rho} \varepsilon \geqslant 0.
$$

Now we introduce the variational equation
\begin{equation}\label{variational equation-state constraint}
	\left\{
	\begin{aligned}
		dx_\rho^1(t) &= \left[b_{x,\rho} x_\rho^1(t) + b_{y,\rho} y_\rho^1(t) + b(u_\rho^\varepsilon(t)) - b(u_\rho(t))\right] dt \\
		&\quad + \left[\sigma_{x,\rho} x_\rho^1(t) + \sigma_{y,\rho} y_\rho^1(t) + \sigma_{z,\rho} z_\rho^1(t)\right] dB_{L_{(t-a)+}}, \\
		-dy_\rho^1(t) &= \left[f_{x,\rho} x_\rho^1(t) + f_{y,\rho} y_\rho^1(t) + f(u_\rho^\varepsilon(t)) - f(u_\rho(t))\right] dt - z_\rho^1(t) dB_{L_{(t-a)+}}, \\
		x_\rho^1(0) &= 0, \quad y_\rho^1(t) = \phi_x(x_\rho(T)) x_\rho^1(T),
	\end{aligned}
	\right.
\end{equation}
where
\begin{equation*}
\begin{aligned}
    b(u_\rho^\varepsilon(t))&\equiv b(t, x_\rho^\varepsilon(t), y_\rho^\varepsilon(t), v_\rho^\varepsilon(t)),\quad b(u_\rho(t))\equiv b(t,x_\rho(t),y_\rho(t),u_\rho(t)),\\
    b_{x,\rho}&\equiv b_x(t,x_\rho(t),y_\rho(t),u_\rho(t)),\quad b_{y,\rho}\equiv b_y(t,x_\rho(t),y_\rho(t),u_\rho(t)),\quad \text{for }b=b,f,g,
\end{aligned}
\end{equation*}
and
\begin{equation*}
    \sigma_{x,\rho}\equiv \sigma_x(t,x_\rho(t),y_\rho(t),z_\rho(t)),\quad \sigma_{y,\rho}\equiv \sigma_y(t,x_\rho(t),y_\rho(t),z_\rho(t)),\quad
    \sigma_{z,\rho}\equiv \sigma_z(t,x_\rho(t),y_\rho(t),z_\rho(t)).
\end{equation*}
It can similarly be shown that (\ref{variational equation-state constraint}) satisfies Assumption \ref{assumption2}, hence it has a unique adapted solution $(x_\rho^1(\cdot), y_\rho^1(\cdot), z_\rho^1(\cdot))\in \mathcal{M}_a[0, T]$. Using the same methods from Lemmas \ref{lemma3.2} to \ref{lemma3.5} in Section 3, we can ontain the following result.

\begin{mylem}\label{lemma4.2}
Let Assumption \ref{assumption2} hold. Then the following inequalities hold:
$$
\begin{gathered}
	\sup\limits_{0 \leqslant t \leqslant T} \left[\mathbb{E} \left| x^\varepsilon_\rho(t) - x_\rho(t) - x^1_\rho(t) \right|^2\right] \leqslant C_\varepsilon \varepsilon^2, \\
	\sup\limits_{0 \leqslant t \leqslant T} \left[\mathbb{E} \left| y^\varepsilon_\rho(t) - y_\rho(t) - y^1_\rho(t) \right|^2\right] \leqslant C_\varepsilon \varepsilon^2, \\
	\mathbb{E} \int_0^T |z^\varepsilon_\rho(t) - z_\rho(t) - z_\rho^1(t)|^2 dL_{(t-a)+} \leqslant C_\varepsilon \varepsilon^2,
\end{gathered}
$$
where \( \lim\limits_{\varepsilon \to 0} C_\varepsilon = 0 \).
\end{mylem}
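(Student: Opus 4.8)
The plan is to mirror, essentially verbatim, the chain of estimates proved in Section 3 for the variational equation (\ref{variational equation}), now applied to the perturbed variational equation (\ref{variational equation-state constraint}). The key observation is that Lemma \ref{lemma3.1} only used the monotonicity conditions in Assumption \ref{assumption2}, which hold uniformly in the reference point; since $(x_\rho(\cdot),y_\rho(\cdot),z_\rho(\cdot))$ is just another admissible trajectory, the linearized coefficients $b_{x,\rho}, b_{y,\rho}, f_{x,\rho}, f_{y,\rho}, \sigma_{x,\rho},\sigma_{y,\rho},\sigma_{z,\rho}$ satisfy exactly the same bounds (by the constant $L$) and the same monotonicity inequalities as their counterparts in (\ref{variational equation}), with the very same constant $C$. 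Hence (\ref{variational equation-state constraint}) satisfies Assumption \ref{assumption2}, admits a unique adapted solution in $\mathcal{M}_a[0,T]$, and the analogues of Lemmas \ref{lemma3.2}, \ref{lemma3.3} and \ref{lemma3.4} hold with constants independent of $\rho$.

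First I would record the $\rho$-uniform first-order estimates: applying It\^o's formula to $x_\rho^1(\cdot)y_\rho^1(\cdot)$ on $[0,T]$ and invoking the monotonicity condition exactly as in the proof of Lemma \ref{lemma3.2} gives $\mathbb{E}\int_0^T(|x_\rho^1|^2+|y_\rho^1|^2)\,dt+\mathbb{E}\int_0^T|z_\rho^1|^2\,dL_{(t-a)+}\le C\varepsilon$, and then, rerunning the BSDE/SDE estimates with It\^o's isometry and Burkholder–Davis–Gundy as in Lemma \ref{lemma3.3}, the sup-in-time bounds $\sup_t\mathbb{E}|x_\rho^1(t)|^2\le C\varepsilon$, $\mathbb{E}(\sup_t|x_\rho^1(t)|^2)\le C\varepsilon$ (and likewise for $y_\rho^1$). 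Feeding these back through the Cauchy–Schwarz argument of Lemma \ref{lemma3.4} upgrades the right-hand sides to $C\varepsilon^{3/2}$ (and $C\varepsilon^3$ for the fourth moments). Throughout, the point is that $|E_\varepsilon|=\varepsilon$ controls $\mathbb{E}(\int_0^T|b(u_\rho^\varepsilon)-b(u_\rho)|\,dt)^2$ and $\mathbb{E}(\int_0^T|f(u_\rho^\varepsilon)-f(u_\rho)|\,dt)^2$ by $C\varepsilon^2$, using the boundedness of $b$ and $f$ along admissible trajectories implied by Assumption \ref{assumption2} i) together with the a priori bounds on $(x_\rho,y_\rho,z_\rho)$.

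Next I would set $\widetilde x_\rho:=x_\rho^\varepsilon-x_\rho-x_\rho^1$, $\widetilde y_\rho:=y_\rho^\varepsilon-y_\rho-y_\rho^1$, $\widetilde z_\rho:=z_\rho^\varepsilon-z_\rho-z_\rho^1$, and write the FBSDE they solve by the same first-order Taylor expansion with remainder (averaged-gradient) coefficients $\widetilde{b}_x,\widetilde b_y,\widetilde\sigma_x,\widetilde\sigma_y,\widetilde\sigma_z,\widetilde f_x,\widetilde f_y$ as in the proof of Lemma \ref{lemma3.5}; the inhomogeneous terms $A_\rho^\varepsilon,B_\rho^\varepsilon,C_\rho^\varepsilon$ are built from differences of partial derivatives evaluated along $x_\rho+\lambda x_\rho^1$ versus $x_\rho$, and by continuity of the derivatives plus the $C\varepsilon^{3/2}$-estimates on $(x_\rho^1,y_\rho^1,z_\rho^1)$ one gets $\mathbb{E}\int_0^T (A_\rho^\varepsilon)^2\,ds\le C_\varepsilon\varepsilon^2$, and similarly for $B_\rho^\varepsilon,C_\rho^\varepsilon$, with $C_\varepsilon\to0$. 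Applying It\^o's formula to $\widetilde x_\rho(\cdot)\widetilde y_\rho(\cdot)$, using the monotonicity condition to absorb the quadratic terms, Young's inequality on the cross terms, and the monotonicity of $\phi$ (so that $\mathbb{E}[\widetilde x_\rho(T)\widetilde y_\rho(T)]=\mathbb{E}[(\phi(x_\rho^\varepsilon(T))-\phi(x_\rho(T)+x_\rho^1(T)))(x_\rho^\varepsilon(T)-x_\rho(T)-x_\rho^1(T))]\ge0$) yields $\mathbb{E}\int_0^T(\widetilde x_\rho^2+\widetilde y_\rho^2)\,ds+\mathbb{E}\int_0^T\widetilde z_\rho^2\,dL_{(s-a)+}\le C_\varepsilon\varepsilon^2$. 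Finally, rerunning the estimates of Lemma \ref{lemma3.3} on this FBSDE promotes the $L^2([0,T])$-bounds to the sup-in-time bounds claimed in Lemma \ref{lemma4.2}.

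The only point requiring a moment's care — and the one I would flag as the main obstacle — is the $\rho$-uniformity of all constants. Every constant appearing in the Gr\"onwall/Young steps depends only on the Lipschitz/boundedness constant $L$ from Assumption \ref{assumption2} i), the monotonicity constant $C$ from Assumption \ref{assumption2} ii), the horizon $T$, and the bound $1/\kappa$ on $dL_{(t-a)+}/dt$ from (\ref{subdiffusion}); none depends on the choice of reference control $u_\rho(\cdot)$ or on $\rho$. One must also check that the a priori norm $\mathbb{E}[\int_0^T(|x_\rho|^2+|y_\rho|^2)\,dt+\int_0^T|z_\rho|^2\,dL_{(t-a)+}]$ is bounded uniformly in $\rho$ — this follows from the stability estimates underlying Theorem \ref{th1} together with $\mathbb{E}\int_0^T u_\rho(t)^2\,dt$ being uniformly bounded, which in turn holds because $\{v(t)\}$ ranges in a bounded closed set. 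With this bookkeeping in place, the proof is identical line-by-line to that of Lemma \ref{lemma3.5}, and we omit the repetitive computation.
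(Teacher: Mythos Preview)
Your proposal is correct and follows exactly the approach the paper takes: the paper's ``proof'' of Lemma~\ref{lemma4.2} is simply the sentence ``Using the same methods from Lemmas \ref{lemma3.2} to \ref{lemma3.5} in Section 3,'' and your write-up carries out precisely that repetition, even adding the useful (and correct) observation that all constants are $\rho$-uniform because they depend only on $L$, $C$, $T$, and $1/\kappa$. If anything, your discussion is more careful than the paper's, which omits the $\rho$-uniformity bookkeeping entirely.
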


From this result, we can derive the following variational inequality.
\begin{mypro}\label{proposition4.1}
Let Assumption \ref{assumption2} hold. Then there exist non-zero constants \( \psi_1 \), \( \psi_2 \), and \( \psi_3 \), such that the following inequality holds:
\begin{equation}\label{variational inequality-state constraint}
\begin{aligned}
	&\psi_1 \mathbb{E}\left[G_{1x}(x(T)) x_1(T)\right] + \psi_2 \mathbb{E}\left[G_{0y}(y(0)) y_1(0)\right] \\
	&+ \psi_3 \left( \mathbb{E} \int_0^T \left[g_x x_1 + g_y y_1 + g(u^\varepsilon) - g(u)\right] dt \right.\\
    &\qquad\quad +\ \mathbb{E}[h_x(x(T)) x_1(T)] + \mathbb{E}[\gamma_y(y(0)) y_1(0)] \bigg) \geqslant o(\varepsilon).
\end{aligned}
\end{equation}
where $(x_1(\cdot),y_1(\cdot),z_2(\cdot))$ satisfies (\ref{variational equation}).
\begin{proof}
	Since
	\[
	J_\rho(u_\rho^\varepsilon(\cdot)) - J_\rho(u_\rho(\cdot)) + \sqrt{\rho} \varepsilon \geqslant 0,
	\]
	we have
	\[
	\frac{J_\rho^2(u_\rho^\epsilon(\cdot)) - J_\rho^2(u_\rho(\cdot))}{J_\rho(u_\rho^\epsilon(\cdot)) + J_\rho(u_\rho(\cdot))} + \sqrt{\rho} \varepsilon \geqslant 0.
	\]
	According to Proposition \ref{proposition3.1}, we have
    $$
    \begin{aligned}
	    &J_\rho^2(u_\rho^\epsilon(\cdot))-J_\rho^2(u_\rho(\cdot))\\
        =&\left\{|\mathbb{E}[G_1(x_\rho^\varepsilon(T))]|^2-|\mathbb{E}[G_1(x_\rho(T))]|^2\right\}
         +\left\{|\mathbb{E}[G_0(y_\rho^\varepsilon(0))]|^2-|\mathbb{E}[G_0(y_\rho(0))]|^2\right\}\\
        &+\left\{[J(u_\rho^\varepsilon(\cdot))-J(u(\cdot))+\rho]^2-[J(u_\rho(\cdot))-J(u(\cdot))+\rho]^2\right\}\\
        =&\ 2\mathbb{E}\left[G_1(x_\rho(T))\right]\mathbb{E}\left[G_{1x}(x_\rho(T))x_\rho^1(T)\right]
         +2\mathbb{E}\left[G_0(y_\rho(0))\right]\mathbb{E}\left[G_{0y}(y_\rho(0))y_\rho^1(0)\right]\\
        &+2[J(u_\rho(\cdot))-J(u(\cdot))+\rho] \left\{\mathbb{E}\int_0^T\left[g_{x,\rho}x_\rho^1(t)+g_{y,\rho}y_\rho^1(t)+g(u_\rho^\varepsilon(t))-g(u_\rho(t))\right]dt \right.\\
        &+\mathbb{E}\left[h_x(x_\rho(T))x_\rho^1(T)\right] +\mathbb{E}\left[\gamma_y(y_\rho(0))y_\rho^1(0)\right]\bigg\} +o(\varepsilon).
    \end{aligned}
    $$
	Thus, we obtain
	\begin{equation}\label{variational inequality-state constraints---}
	\begin{aligned}
		\frac{J_\rho^2(u_\rho^\epsilon(\cdot)) - J_\rho^2(u_\rho(\cdot))}{J_\rho(u_\rho^\epsilon(\cdot)) + J_\rho(u_\rho(\cdot))}
		=&\ \psi^\varepsilon_{\rho1} \mathbb{E} \left[G_{1x}(x_\rho(T)) x_\rho^1(T)\right] + \psi^\varepsilon_{\rho2} \mathbb{E}\left[G_{0y}(y_\rho(0)) y_\rho^1(0)\right] \\
		&+ \psi^\varepsilon_{\rho3} \left\{ \mathbb{E} \int_0^T \left[ g_{x,\rho} x_\rho^1(t) + g_{y,\rho} y_\rho^1(t) + g(u_\rho^\varepsilon(t)) - g(u_\rho(t)) \right] dt \right. \\
		& \qquad\quad + \mathbb{E}\left[h_x(x_\rho(T)) x_\rho^1(T)\right] + \mathbb{E}\left[\gamma_y(y_\rho(0)) y_\rho^1(0)\right] \bigg\},
	\end{aligned}
	\end{equation}
	where
	\[
	\psi_{\rho 1}^\varepsilon := \frac{2 \mathbb{E}[G_1(x_\rho(T))]}{J_\rho(u_\rho^\varepsilon(\cdot)) + J_\rho(u_\rho(\cdot))}, \
	\psi_{\rho 2}^\varepsilon := \frac{2 \mathbb{E}[G_0(y_\rho(0))]}{J_\rho(u_\rho^\varepsilon(\cdot)) + J_\rho(u_\rho(\cdot))}, \
	\psi_{\rho 3}^\varepsilon := \frac{2 \left[ J(u_\rho(\cdot)) - J(u(\cdot)) + \rho \right]}{J_\rho(u_\rho^\varepsilon(\cdot)) + J_\rho(u_\rho(\cdot))}.
	\]

	Since
	\[
	\lim\limits_{\varepsilon \rightarrow 0} J_\rho(u^\varepsilon_\rho(\cdot)) = J_\rho(u_\rho(\cdot)),
	\]
	then
	\[
	\lim_{\varepsilon \to 0} \left( |\psi_{\rho 1}^\varepsilon|^2 + |\psi_{\rho 2}^\varepsilon|^2 + |\psi_{\rho 3}^\varepsilon|^2\right) = 1.
	\]
	Therefore, there exists a convergent subsequence (still denoted as) \( (\psi_{\rho1}^\varepsilon, \psi_{\rho2}^\varepsilon, \psi_{\rho3}^\varepsilon) \to (\psi_{\rho1},
    \psi_{\rho2},\\ \psi_{\rho3}) \) as \( \varepsilon \to 0 \), such that
	\[
	|\psi_{\rho 1}|^2 + |\psi_{\rho 2}|^2 + |\psi_{\rho 3}|^2 = 1.
	\]
	Similarly, there exists a convergent subsequence (still denoted as) \( (\psi_{\rho1}, \psi_{\rho2}, \psi_{\rho3}) \to (\psi_{1}, \psi_{2}, \psi_{3}) \) as \( \rho \to 0 \),
    satisfying
	\[
	|\psi_1|^2 + |\psi_2|^2 + |\psi_3|^2 = 1.
	\]
	Since \( u_\rho^\varepsilon(\cdot) \to u^\varepsilon(\cdot) \) and \( u_\rho \to u(\cdot) \) as \( \rho \to 0 \), by the continuous dependence of parameters of the solution to FBSDE with sub-diffusion (Lemma 4.2 in Zhang and Chen \cite{ZC2024-SICON-2}), we have
	\[
	x_\rho^1(\cdot) \to x_1(\cdot), \quad y_\rho^1(\cdot) \to y_1(\cdot), \quad z_\rho^1(\cdot) \to z_1(\cdot),
	\]
	\[
	x_\rho(\cdot) \to x(\cdot), \quad y_\rho(\cdot) \to y(\cdot), \quad z_\rho(\cdot) \to z(\cdot),\quad \text{in }\mathcal{M}_a[0, T].
	\]
	Substituting these results into (\ref{variational inequality-state constraints---}) and letting \( \rho \to 0 \) yields
	\[
	\begin{aligned}
		\frac{J_\rho^2(u_\rho^\epsilon(\cdot)) - J_\rho^2(u_\rho(\cdot))}{J_\rho(u_\rho^\epsilon(\cdot)) + J_\rho(u_\rho(\cdot))}
		= &\ \psi_1 \mathbb{E}[G_{1x}(x(T)) x_1(T)] + \psi_2 \mathbb{E} [G_{0y}(y(0)) y_1(0)] \\
		&+ \psi_3 \left\{ \mathbb{E} \int_0^T \left[ g_x x_1 + g_y y_1(t) + g(u^\varepsilon) - g(u) \right] dt \right. \\
		& \qquad\quad + \mathbb{E}[h_x(x(T)) x_1(T)] + \mathbb{E}[\gamma_y(y(0)) y_1(0)] \bigg\} \geqslant o(\varepsilon).
	\end{aligned}
	\]
	Thus, (\ref{variational inequality-state constraint}) holds.
\end{proof}
\end{mypro}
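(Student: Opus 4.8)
The plan is to run Ekeland's variational principle (Lemma~\ref{Ekeland variational principle}) on the penalized functional $J_\rho$, linearize in the spike size $\varepsilon$, and then pass to the limit first in $\varepsilon$ and then in $\rho$. From item~iii) of the Ekeland lemma with $w=u_\rho^\varepsilon$ and $d(u_\rho^\varepsilon,u_\rho)\leqslant\varepsilon$ one gets $J_\rho(u_\rho^\varepsilon(\cdot))-J_\rho(u_\rho(\cdot))+\sqrt{\rho}\,\varepsilon\geqslant0$. To avoid differentiating the square root that defines $J_\rho$, I would rewrite the left-hand difference as $\big(J_\rho^2(u_\rho^\varepsilon)-J_\rho^2(u_\rho)\big)\big/\big(J_\rho(u_\rho^\varepsilon)+J_\rho(u_\rho)\big)$; since $J_\rho^2$ is a sum of three squares of scalar smooth functionals of $\big(\mathbb{E}[G_1(x_\rho^\varepsilon(T))],\mathbb{E}[G_0(y_\rho^\varepsilon(0))],J(u_\rho^\varepsilon)\big)$, the numerator is amenable to a first-order Taylor expansion.

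The second step is to expand $J_\rho^2(u_\rho^\varepsilon)-J_\rho^2(u_\rho)$ term by term. For the constraint terms, $|\mathbb{E}[G_1(x_\rho^\varepsilon(T))]|^2-|\mathbb{E}[G_1(x_\rho(T))]|^2 = 2\,\mathbb{E}[G_1(x_\rho(T))]\,\mathbb{E}[G_{1x}(x_\rho(T))x_\rho^1(T)]+o(\varepsilon)$, using the second-order estimate of Lemma~\ref{lemma4.2} (the analog of Lemma~\ref{lemma3.5} for the spike variation of $u_\rho$), the smoothness and boundedness of $G_1,G_{1x}$, and Assumption~\ref{assumption2}; the $G_0$ term is identical. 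For the cost term, the first-order expansion carried out in the proof of Proposition~\ref{proposition3.1}, now applied with $u_\rho$ in place of $u$ (the expansion itself does not use optimality, only the variational equation~(\ref{variational equation-state constraint})), gives $J(u_\rho^\varepsilon)-J(u_\rho)$ equal to the bracketed cost-variation expression in $g_{x,\rho},g_{y,\rho},x_\rho^1,y_\rho^1$ together with the $h$- and $\gamma$-terms, up to $o(\varepsilon)$; hence the third squared term contributes $2[J(u_\rho)-J(u)+\rho]$ times that bracket, plus $o(\varepsilon)$. Dividing through by $J_\rho(u_\rho^\varepsilon)+J_\rho(u_\rho)$ and naming the three resulting prefactors $\psi_{\rho1}^\varepsilon,\psi_{\rho2}^\varepsilon,\psi_{\rho3}^\varepsilon$, the Ekeland inequality becomes precisely a $(\rho,\varepsilon)$-version of (\ref{variational inequality-state constraint}) with an $o(\varepsilon)$ remainder.

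The last step, which I expect to be the main obstacle, is the iterated compactness/limit argument. Continuous dependence of the FBSDE solution on the control (Lemma~4.2 in Zhang and Chen~\cite{ZC2024-SICON-2}) together with $d(u_\rho^\varepsilon,u_\rho)\to0$ gives $J_\rho(u_\rho^\varepsilon)\to J_\rho(u_\rho)$ as $\varepsilon\to0$, hence $|\psi_{\rho1}^\varepsilon|^2+|\psi_{\rho2}^\varepsilon|^2+|\psi_{\rho3}^\varepsilon|^2\to1$ because the sum of the squared numerators equals the square of the common denominator. Compactness of the unit sphere of $\mathbb{R}^3$ lets me extract a subsequence $(\psi_{\rho i}^\varepsilon)\to(\psi_{\rho i})$ with $\sum_i|\psi_{\rho i}|^2=1$; letting $\varepsilon\to0$ yields a $\rho$-indexed inequality in the $u_\rho$-data. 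A further extraction as $\rho\to0$ produces $(\psi_1,\psi_2,\psi_3)$ with $\sum_i|\psi_i|^2=1$, in particular nonzero, which is the asserted non-triviality; and since $u_\rho\to u$ and $u_\rho^\varepsilon\to u^\varepsilon$ in the metric $d$, continuous dependence gives $x_\rho\to x,\ y_\rho\to y,\ z_\rho\to z$ and $x_\rho^1\to x_1,\ y_\rho^1\to y_1,\ z_\rho^1\to z_1$ in $\mathcal{M}_a[0,T]$, with $(x_1,y_1,z_1)$ solving the unperturbed variational equation~(\ref{variational equation}); passing to the limit in each expectation delivers~(\ref{variational inequality-state constraint}). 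The delicate points are: keeping the $o(\varepsilon)$ remainders and the $C_\varepsilon$ of Lemma~\ref{lemma4.2} uniform in $\rho$ so that the order of limits ($\varepsilon\to0$ first, then $\rho\to0$) is legitimate; deducing $d(u_\rho^\varepsilon,u^\varepsilon)\to0$ from $d(u_\rho,u)\to0$; and verifying that products such as $\mathbb{E}[G_1(x_\rho(T))]\,\mathbb{E}[G_{1x}(x_\rho(T))x_\rho^1(T)]$ converge, which requires both the $L^2$-convergence of $x_\rho^1(T)$ and the uniform integrability supplied by Lemma~\ref{lemma4.2} and the bounds in Assumption~\ref{assumption2}.
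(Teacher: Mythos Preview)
Your proposal is correct and follows essentially the same route as the paper: rewrite the Ekeland inequality via $J_\rho^2$, Taylor-expand each of the three squared terms using Lemma~\ref{lemma4.2} and the computation behind Proposition~\ref{proposition3.1}, define the coefficients $\psi_{\rho i}^\varepsilon$, and then run the two-stage compactness argument on the unit sphere of $\mathbb{R}^3$ together with continuous dependence from \cite{ZC2024-SICON-2}. The delicate points you flag (uniformity in $\rho$ of the $o(\varepsilon)$ remainders, and the legitimacy of the iterated limit) are exactly the places where the paper's argument is tacit rather than explicit, so your caution there is well placed.
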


Next, we introduce the adjoint equation:
\begin{equation}\label{adjoint equation-state constraint}
\left\{
	\begin{aligned}
		dp(t) &= [f_y p(t) - b_y q(t) - \psi_3 g_y] dt - \sigma_y k(t) d{L_{(t-a)+}} - \sigma_z k(t) dB_{L_{(t-a)+}}, \\
		-dq(t) &= [-f_x p(t) + b_x q(t) + \psi_3 g_x] dt + \sigma_x k(t) d{L_{(t-a)+}} - k(t) dB_{L_{(t-a)+}}, \\
		p(0) &= -G_{0y}(y(0)) \psi_2 - \gamma_y y(0) \psi_3, \\
		q(T) &= G_{1x}(x(T)) \psi_1 - \phi_x(x(T)) p(T) + h_x(x(T)) \psi_3.
	\end{aligned}
\right.
\end{equation}
Similarly, by Theorem \ref{th1}, there exists a unique adapted solution \( (p(\cdot), q(\cdot), k(\cdot))\in \mathcal{M}_a[0, T] \) to (\ref{adjoint equation-state constraint}). Define the Hamiltonian:
\[
H(t, x, y, z, v, p, q) := q(t) b(t, x, y, v) - p(t) f(t, x, y, v) + g(t, x, y, v).
\]

Now, we present the main theorem of this section.

\begin{mythm}\label{theorem4.1}
Let \(\ u(\cdot) \)  be an optimal control and\( (x(\cdot), y(\cdot), z(\cdot)) \) the corresponding solution to the FBSDE with sub-diffusion (\ref{controlled forward-backward stochastic system}). Then there exist three constants \( \psi_1 \), \( \psi_2 \), \( \psi_3 \), not all zero, such that \( (p(\cdot), q(\cdot), k(\cdot)) \) is the solution to the adjoint equation (\ref{adjoint equation-state constraint}). Moreover, for any \( v(\cdot) \in \mathcal{U}_a^\prime[0, T] \), the following inequality holds:
\begin{equation}\label{SMP-state constraint}
H(t, x(t), y(t), z(t), v(t), p(t), q(t)) \geqslant H(t, x(t), y(t), z(t), u(t), p(t), q(t)),\ a.e.\ t\in[0,T],\ a.s..
\end{equation}
\begin{proof}
	Applying Ito's formula to \( p(\cdot) y_1(\cdot) + q(\cdot) x_1(\cdot) \) on the interval \( [0, T] \), we have
	\[
	\begin{aligned}
		&p(T) y_1(T) + q(T) x_1(T) - p(0) y_1(0) \\
		&= \int_0^T \Big\{-p \left[ f_x x_1 + f_y y_1 + f(u^\varepsilon) - f(u) \right]\Big\} dt + \int_0^T \left[f_y p - b_y q - \psi_3 g_y\right] y_1 dt \\
		&\quad - \int_0^T \left[k(t) \sigma_y y_1 + k(t) \sigma_z z_1\right] d{L_{(t-a)+}} \\
		&\quad + \int_0^T \left[ b_x x^1 + b_y y^1 + b(u^\varepsilon) - b(u) \right] q dt + \int_0^T \left[f_x p - b_x q - \psi_3 g_x\right] x_1 dt \\
		&\quad + \int_0^T [k \sigma_y y_1 + k \sigma_z z_1] d{L_{(t-a)+}} + \text{martingale}.
	\end{aligned}
	\]
	Substituting into the adjoint equation (\ref{adjoint equation-state constraint}) and taking expectations on both sides, it yields
	\[
	\begin{aligned}
		&\mathbb{E}\left[ y_1(0) G_{0y}(y(0)) \psi_2 + y_1(0) \gamma_y y(0) \psi_3 + x_1(T) G_{1x}(x(T)) \psi_1 + x_1(T) h_x(x(T)) \psi_3 \right] \\
		&= \mathbb{E} \int_0^T \Big\{-p \left[ f(u^\varepsilon) - f(u) \right] + q \left[ b(u^\varepsilon) - b(u) \right] - \psi_3 (g_x x^1 + g_y y^1) \Big\} dt.
	\end{aligned}
	\]
	Using the variational inequality (\ref{variational inequality-state constraint}), we get
	\[
	\mathbb{E} \int_0^T \Big\{ -p \left[ f(u^\varepsilon) - f(u) \right] + q \left[ b(u^\varepsilon) - b(u) \right] + g(u^\varepsilon) - g(u) \Big\} dt \geqslant o(\varepsilon).
	\]
	By the definition of the Hamiltonian, let \( \varepsilon \to 0 \), (\ref{SMP-state constraint}) holds.
\end{proof}
\end{mythm}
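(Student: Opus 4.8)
The plan is to run the same duality argument as in the proof of Theorem~\ref{theorem3.1}, now feeding in the weighted data coming from the Ekeland procedure. Proposition~\ref{proposition4.1} is taken as the starting point: it furnishes constants $\psi_1,\psi_2,\psi_3$, not all zero (indeed with $|\psi_1|^2+|\psi_2|^2+|\psi_3|^2=1$), together with the variational inequality~(\ref{variational inequality-state constraint}) expressed through the solution $(x_1(\cdot),y_1(\cdot),z_1(\cdot))$ of the variational equation~(\ref{variational equation}). Since the coefficients of the adjoint equation~(\ref{adjoint equation-state constraint}) are linear in $(p,q,k)$ and inherit, exactly as in Section~3, the monotonicity condition of Assumption~\ref{assumption}~ii), Theorem~\ref{th1} yields a unique adapted solution $(p(\cdot),q(\cdot),k(\cdot))\in\mathcal{M}_a[0,T]$.

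The central step is to apply It\^o's formula to $p(t)y_1(t)+q(t)x_1(t)$ on $[0,T]$. Here one uses that $B_{L_{(t-a)+}}$ is a continuous martingale with $\langle B_{L_{(t-a)+}}\rangle=L_{(t-a)+}$, so that the cross-variation between the $dB_{L_{(t-a)+}}$-parts of $p$ and $y_1$ (and of $q$ and $x_1$) produces integrals against $dL_{(t-a)+}$. Substituting the drifts of~(\ref{variational equation}) and~(\ref{adjoint equation-state constraint}): the bilinear terms in $x_1,y_1$ cancel in pairs, while the $dL_{(t-a)+}$-contribution $-\int_0^T k(\sigma_y y_1+\sigma_z z_1)\,dL_{(t-a)+}$ arising from $d(p y_1)$ is exactly annihilated by the ones from $d(q x_1)$, the $-\int_0^T k\sigma_x x_1\,dL_{(t-a)+}$ piece of the latter being cancelled in turn by the cross-variation $\int_0^T k(\sigma_x x_1+\sigma_y y_1+\sigma_z z_1)\,dL_{(t-a)+}$. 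Inserting the boundary data $x_1(0)=0$, $y_1(T)=\phi_x(x(T))x_1(T)$, $p(0)=-G_{0y}(y(0))\psi_2-\gamma_y(y(0))\psi_3$, $q(T)=G_{1x}(x(T))\psi_1-\phi_x(x(T))p(T)+h_x(x(T))\psi_3$, and then taking expectations---the remaining $dB_{L_{(t-a)+}}$-integral being a true martingale by the $\mathcal{M}_a[0,T]$ integrability---produces the identity equating $\mathbb{E}[y_1(0)G_{0y}(y(0))\psi_2+y_1(0)\gamma_y(y(0))\psi_3+x_1(T)G_{1x}(x(T))\psi_1+x_1(T)h_x(x(T))\psi_3]$ with $\mathbb{E}\int_0^T\{-p[f(u^\varepsilon)-f(u)]+q[b(u^\varepsilon)-b(u)]-\psi_3(g_x x_1+g_y y_1)\}\,dt$.

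Comparing this identity with~(\ref{variational inequality-state constraint}) makes all the boundary terms and the terms linear in $x_1,y_1$ cancel, collapsing the variational inequality to
\begin{equation*}
\mathbb{E}\int_0^T\big\{-p(t)[f(u^\varepsilon(t))-f(u(t))]+q(t)[b(u^\varepsilon(t))-b(u(t))]+g(u^\varepsilon(t))-g(u(t))\big\}\,dt\geqslant o(\varepsilon),
\end{equation*}
i.e. $\mathbb{E}\int_0^T[H(t,x,y,z,u^\varepsilon,p,q)-H(t,x,y,z,u,p,q)]\,dt\geqslant o(\varepsilon)$ by the definition of $H$. Since $u^\varepsilon=u$ off $E_\varepsilon$, this is an integral over $E_\varepsilon$; dividing by $|E_\varepsilon|=\varepsilon$, letting $E_\varepsilon$ shrink to a Lebesgue point and invoking the arbitrariness of $v(\cdot)\in\mathcal{U}_a^\prime[0,T]$, we send $\varepsilon\to0$ to obtain~(\ref{SMP-state constraint}). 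The main obstacle is the It\^o/cross-variation bookkeeping for the time-changed martingale $B_{L_{(t-a)+}}$: one must track simultaneously the $dL_{(t-a)+}$-drift terms that were deliberately built into the adjoint equation and the quadratic-variation corrections, and check that they cancel precisely so that no residual $dL_{(t-a)+}$-term survives after taking expectations; the other substantive point, the non-triviality of the multipliers $(\psi_1,\psi_2,\psi_3)$, is already secured in Proposition~\ref{proposition4.1} through the compactness of the unit sphere in $\mathbb{R}^3$ and the continuous dependence of the sub-diffusion FBSDE on its parameters.
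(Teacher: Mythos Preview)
Your proposal is correct and follows essentially the same approach as the paper: apply It\^o's formula to $p(\cdot)y_1(\cdot)+q(\cdot)x_1(\cdot)$, take expectations so that the boundary data of the adjoint equation~(\ref{adjoint equation-state constraint}) absorb the constraint multipliers, combine the resulting identity with the variational inequality~(\ref{variational inequality-state constraint}) of Proposition~\ref{proposition4.1}, and let $\varepsilon\to0$. Your additional remarks on the $dL_{(t-a)+}$-cancellations and the Lebesgue-point argument simply make explicit what the paper leaves implicit; note, as in the paper, that the surviving running-cost term should carry the factor $\psi_3$, so that the final pointwise inequality is really $qb-pf+\psi_3 g$ evaluated at $v$ versus $u$.
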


In particular, when $G_1=G_2=0, \psi_1=\psi_2=0$ and \( \psi_3 = 1 \), the result of Theorem \ref{theorem4.1} reduces to that of Theorem \ref{theorem3.1}.

\section{Applications to cash management problem in bear markets}

In this section, we apply preceding theoretical results of stochastic maximum principles of FBSDEs driven by sub-diffusion, to model cash management optimization in bear markets. While FBSDEs have been widely applied to cash management optimization in active markets (for example, Bensoussan et al. \cite{BCS2009}), research on their applications to bear markets remains limited. In fact, in bear markets with infrequent trading, traditional stochastic modeling methods cannot accurately represent market dynamics. To address this, we use FBSDEs driven by sub-diffusion to model bear market conditions.

Consider the following FBSDE driven by sub-diffusion:
\begin{equation}\label{example FBSDE}
\left\{
	\begin{aligned}
		dx^v(t) &= \left(-m_1x^v(t)-n_1y^v(t)+c_1v(t)\right)dt-n_2z^v(t)dB_{L_{(t-a)+}},  \\
		-dy^v(t) &= \left(-m_1y^v(t)+m_2x^v(t)+c_2v(t)\right)dt-z^v(t)dB_{L_{(t-a)+}},  \\
		x^v(0)&= x_0,\quad y^v(T)=ax^v(T).
	\end{aligned}
\right.
\end{equation}
In this model, \( x^v(t) \) represents the agent's cash flow at time \( t \), \( v(t) \) represents the agent's investment and withdrawal decisions at time \( t \), \( y^v(t) \) represents the recursive utility of \( v(t) \) at time \( t \), and \( z^v(t) \) represents the volatility of the utility. Due to the significant mutual influence between cash flow and recursive utility, this FBSDE (\ref{example FBSDE}) is fully coupled, and \( x^v(\cdot) \) and \( y^v(\cdot) \) exhibit a negative correlation. 	
The cost functional is
\begin{equation}\label{cost functional-example}
	J(v(\cdot)) := \mathbb{E} \left[\frac{1}{2} \int_0^T (v(t) - l(t))^2 dt - y^v(0) \right],
\end{equation}
where \( l(\cdot) \) is a deterministic bounded function.
	
The objective is to find \( u(\cdot) \in \mathcal{U}_a^\prime[0,T] \) such that
\[
    J(u(\cdot)) = \inf_{v(\cdot) \in\, \mathcal{U}_a^\prime[0,T]} J(v(\cdot)).
\]
The cost functional indicates that the agent aims to maximize recursive utility while minimizing investment risk to achieve a given investment objective.	
	
Here, \( a \geqslant 0, m_i \geqslant 0, n_i \geqslant 0 \, (i=1,2) \). Based on Assumption {\ref{assumption2}} and Theorem \ref{th1}, for any \( v(\cdot) \in \mathcal{U}_a^\prime[0,T] \), equation (\ref{example FBSDE}) admits a unique adapted solution \( (x^v(\cdot),y^v(\cdot)),z^v(\cdot)) \in \mathcal{M}_a[0, T]\).
	
We apply the stochastic maximum principle obtained in Section 3 (Theorem \ref{theorem3.1}), to solve this problem. In this case,
\[
	b(t,x,y,v) = -m_1x - n_1y + c_1v, \quad \sigma(t,x,y,z) = -n_2z, \quad f(t,x,y,v) = -m_1y + m_2x + c_2v,
\]
\[
	\phi(x) = ax, \quad g(t,x,y,v) = \frac{(v - l(t))^2}{2}, \quad \gamma(y) = -y, \quad h(x) = 0.
\]
Thus, the adjoint equation (\ref{adjoint equation}) writes
\begin{equation}\label{adjoint equation-example}
\left\{
	\begin{aligned}
		dp(t) &= [-m_1p(t) + n_1q(t)] dt + n_2k(t) dB_{L_{(t-a)+}}, \\
		-dq(t) &= [-m_2p(t) - m_1q(t)] dt - k(t) B_{L_{(t-a)+}}, \\
		p(0) &= 1, \quad q(T) = -ap(T),
	\end{aligned}
\right.
\end{equation}
and	the Hamiltonian is
\begin{equation}\label{Hamilton-example}
		H(t, x, y, z, v, p, q) = q(-m_1x - n_1y + c_1v) - p(-m_1y + m_2x + c_2v)+ \frac{(v - l(t))^2}{2}.
\end{equation}
By (\ref{SMP}), the optimal control $u(\cdot)$ satisfies
\begin{equation}\label{optimal control-example}
	u(t) = -c_1q(t) + c_2p(t) + l(t),\ a.e.\ t\in[0,T],\ a.s..
\end{equation}

Since Theorem \ref{th1} gives necessary conditions for the optimal control, but not sufficient ones, we cannot directly conclude that \( u(\cdot) \) is really optimal. Next, we verify that \( u(t) \) of (\ref{optimal control-example}) is indeed the optimal strategy. Let \( v(\cdot) \) be any strategy in \( \mathcal{U}_a^\prime[0,T] \), and as before, denote the solution to (\ref{example FBSDE}) associated with \( v(\cdot) \) as \( (x^v(\cdot), y^v(\cdot), z^v(\cdot)) \) and the solution associated with \( u(\cdot) \) as \( (x(\cdot), y(\cdot), z(\cdot)) \). We define
\[
	\tilde{x}(t) := x^v(t) - x(t), \quad \tilde{y}(t) := y^v(t) - y(t), \quad \tilde{z}(t) := z^v(t) - z(t), \quad \tilde{v}(t) := v(t) - u(t).
\]
Applying It\^o's formula to \( \tilde{x}(\cdot)q(\cdot) + \tilde{y}(\cdot)p(\cdot) \), we have
\[
	\begin{aligned}
		\mathbb{E}[\tilde{y}(0)] &= \mathbb{E} \int_0^T \Big\{-p(t)[-m_1\tilde{y}(t)+ m_2\tilde{x}(t) + c_2\tilde{v}(t)] + q(t)[-m_1\tilde{x}(t) - n_1\tilde{y}(t) + c_1\tilde{v}(t)] \\
		&\qquad\qquad + \tilde{y}(t)[-m_1p(t) + n_1q(t)] - \tilde{x}(t)[-m_2p(t) - m_1q(t)] \Big\} dt\\
	    &= \mathbb{E} \int_0^T [-c_2p(t) + c_1q(t)]\tilde{v}(t) dt.
	\end{aligned}
\]
Noticing that
\[
	\mathbb{E}[\tilde{y}(0)] = \mathbb{E} \int_0^T (l(t) - u(t))(u(t) - v(t)) dt.
\]
Thus
\[
\begin{aligned}
	J(v(\cdot))-J(u(\cdot))&=\mathbb{E}\left[\frac{1}{2}\int_0^T \Big[(v(t)-l(t))^2-(u(t)-l(t))^2\Big] dt -\widetilde{y}(0)\right]\\
    &=\mathbb{E}\left[\int_0^T \Big[\frac{(v(t)-l(t))^2-(u(t)-l(t))^2}{2}-(l(t)-u(t))(u(t)-v(t))\Big] dt\right]\\
    &=\mathbb{E}\left[\int_0^T\frac{(v(t)-u(t))^2}{2}{d}t\right] \geqslant 0.
\end{aligned}
\]
Therefore, \( u(\cdot) \) is indeed the optimal strategy.

\section{Concluding remarks}

In this paper, optimal control problems of FBSDEs driven by sub-diffusion are studied. The stochastic maximum principle is obtained, where the diffusion coefficient $\sigma$ is independent of control variable and the control domain may not be convex. Furthermore, the case with initial and terminal constraints state constraints is also addressed, by employing Ekeland's variational principle. Theoretical results are applied to a cash management problem, where the optimal strategy is derived explicitly.

When the diffusion coefficient $\sigma$ depends on the control, but the control domain is required to be a convex set, the stochastic maximum principle has been established in \cite{ZC2024-SICON-2} through convex variation. However, for the case where the diffusion coefficient $\sigma$ is dependent of control and the control domain may not be convex, the corresponding maximum principle still need to be researched. It can be anticipated that its proof would require the introduction of second-order variational equations and two adjoint equations (See \cite{Peng1990}, \cite{Hu2017}, \cite{HJX2018} for standard diffusions' cases). Furthermore, the results can be extended to mean-field type systems, systems with random jumps, partial information, and differential games. For constrained cases, the constraint conditions could also be modified by incorporating integral type constraints. We will consider these topic in the near future.

\end{document}